\def\remove#1{}
\newtheorem*{theoremA}{Theorem A}
\newtheorem*{theoremB}{Theorem B}
\newtheorem*{theoremC}{Theorem C}
\newtheorem{theorem}{Theorem}
\newtheorem{lemma}[theorem]{Lemma}
\newtheorem{proposition}[theorem]{Proposition}
\newtheorem{remark}[theorem]{Remark}
\newtheorem{definition}[theorem]{Definition}
\newtheorem{corollary}[theorem]{Corollary}
\begin{document}

\noindent{\Large 
The algebraic and geometric classification of \\ 
transposed Poisson algebras}\footnote{
The first part of this work is supported by the Spanish Government through the Ministry of Universities grant `Margarita Salas', funded by the European Union - NextGenerationEU; by  FCT UIDB/00212/2020 and UIDP/00212/2020. 
The second part of this work is supported by RSF 22-11-00081. 
}

 {\bf
 Patr\'{i}cia Damas Beites$^{a}$,
      Amir Fern\'andez Ouaridi$^{b,c}$ \&
 Ivan Kaygorodov$^{a,d,e}$ \\

\smallskip
}

{\tiny
$^a$ Departamento de Matemática and Centro de Matemática e Aplicações, Universidade da Beira Interior, Covilh\~{a}, Portugal

 $^{b}$   Centro de Matemática, Universidade de Coimbra, Coimbra, Portugal

$^{c}$ University of Cadiz, Puerto Real, Spain

$^{d}$   Saint Petersburg  University, Russia

$^{e}$ Moscow Center for Fundamental and Applied Mathematics, Moscow,   Russia 
\smallskip

 E-mail addresses:

     Patr\'{i}cia Damas Beites (pbeites@ubi.pt)

Amir Fern\'andez Ouaridi (amir.fernandez.ouaridi@gmail.com)

 Ivan Kaygorodov (kaygorodov.ivan@gmail.com) 

 \smallskip

}

\noindent {\bf Abstract.}
{\it 
The algebraic and geometric classification of all complex $3$-dimensional transposed Poisson algebras is obtained.
Also, we discuss strong special $3$-dimensional transposed Poisson algebras.
 }

\ 

\noindent {\bf Keywords}: 
{\it Lie algebra, transposed Poisson algebra, $\delta$-derivation, algebraic classification, geometric classification.}

\noindent {\bf MSC2020}: 17A30, 17B40, 17B63.

\section*{Introduction}

Poisson algebras arose from the study of Poisson geometry in the 1970s and have appeared in an extremely wide range of areas in mathematics and physics, such as Poisson manifolds, algebraic geometry, operads, quantization theory, quantum groups, and classical and quantum mechanics. The study of Poisson algebras also led to other algebraic structures, such as 
$F$-manifold algebras,
Novikov-Poisson algebras,
Double Poisson algebras,
Poisson $n$-Lie algebras, etc \cite{vd,blsm17,d19}.
The study of all possible Poisson structures with a certain Lie or associative part is an important problem in the theory of Poisson algebras \cite{YYZ07}.
Recently, a dual notion of the Poisson algebra (transposed Poisson algebra), by exchanging the roles of the two binary operations in the Leibniz rule defining the Poisson algebra, has been introduced in the paper \cite{bai20} of Bai, Bai, Guo, and Wu. 
They have shown that the transposed Poisson algebra defined in this way not only shares common properties with the Poisson algebra, including the closure undertaking tensor products and the Koszul self-duality as an operad but also admits a rich class of identities. More significantly, a transposed Poisson algebra naturally arises from a Novikov-Poisson algebra by taking the commutator Lie algebra of the Novikov algebra. 
Unital transposed Poisson algebras are studied in \cite{bfk22}.
The ${\rm Hom}$- and ${\rm BiHom}$-versions of transposed Poisson algebras are considered in \cite{ hom,bihom}.
Some new examples of transposed Poisson algebras are constructed by applying the Kantor product of multiplications on the same vector space \cite{FK21}. 
More recently, in a paper by Ferreira, Kaygorodov and  Lopatkin,
a relation between $\frac{1}{2}$-derivations of Lie algebras and 
transposed Poisson algebras has been established \cite{FKL}.

The algebraic classification (up to isomorphism) of algebras of dimension $n$ from a certain variety
defined by a certain family of polynomial identities is a classic problem in the theory of non-associative algebras.
There are many results related to the algebraic classification of small-dimensional algebras in many varieties of
associative and non-associative algebras.
So, algebraic classifications of 
$2$-dimensional algebras \cite{petersson},
$3$-dimensional evolution algebras \cite{ccsmv},
$3$-dimensional anticommutative algebras \cite{japan,ikv19},
$4$-dimensional division algebras \cite{Ernst,erik},
$5$-dimensional commutative nilpotent algebras \cite{jkk21}
and 
$8$-dimensional dual Mock Lie algebras  \cite{ckls20} have been given.
Section \ref{alg3dim} is devoted to the complete algebraic classification of non-isomorphic complex $3$-dimensional transposed Poisson algebras.
 To obtain this classification, we will use the algebraic classification of suitable Lie algebras and associative commutative algebras; and the method of describing all transposed Poisson algebra structures on a given Lie algebras (the present method has been developed in \cite{FKL}).

The study of special and non-special algebras starts from the theory of Jordan algebras.
It is known, that the class of special Jordan algebras (i.e. embedded into associative algebras relative to the multiplication $x \circ y =xy+yx$) is a quasivariety, but it is not a variety of algebras \cite{sverchkov}. 
It is known that each Novikov-Poisson algebra under commutator product on non-associative multiplication gives a transposed Poisson algebra \cite{bai20}.
Let us say that a transposed Poisson algebra is special if it can be embedded into a  Novikov-Poisson algebra relative to the commutator bracket. 
Similarly, let us say that a transposed Poisson  algebra is  $D$-special  (from ``differentially'') if it embeds into 
a commutative algebra with a derivation relative to the bracket 
$[x,y]=\mathfrak D(x)y-x\mathfrak D(y).$ Obviously, every $D$-special transposed Poisson algebra is a special one.
On the other hand,   the class of all special Gelfand-Dorfman
algebras (i.e. embedded into Poisson algebras with derivation relative to the multiplication $x \circ y =xd(y)$) is closed with respect to homomorphisms and thus forms
a variety \cite{ks21}. Also known as each two-generated Jordan algebra is special \cite{shirshov};
each one-generated Jordan dialgebra is special \cite{vv12};
each $2$-dimensional  Gelfand-Dorfman algebra is special \cite{ks21}.
   Section \ref{spe3dim} is devoted to the description of all complex strong  $D$-special 
   (i.e. isomorphic to a commutative algebra with a derivation and the bracket given by 
$[x,y]=\mathfrak D(x)y-x\mathfrak D(y)$) $2$- and $3$-dimensional transposed Poisson algebras.

 Geometric properties of a variety of algebras defined by a family of polynomial identities have been an object of study since the 1970s (see, \cite{wolf2,   chouhy,   fkkv22,     gabriel,      cibils,  shaf, GRH}). 
 Gabriel described the irreducible components of the variety of $4$-dimensional unital associative algebras~\cite{gabriel}.  
 Cibils considered rigid associative algebras with $2$-step nilpotent radical \cite{cibils}.
 Grunewald and O'Halloran computed the degenerations for the variety of $5$-dimensional nilpotent Lie algebras~\cite{GRH}. 
 All irreducible components of  $2$-step nilpotent commutative and anticommutative algebras have been described in \cite{shaf,ikp20}.
Chouhy proved that in the case of finite-dimensional associative algebras,
 the $N$-Koszul property is preserved under the degeneration relation~\cite{chouhy}.
  The study of degenerations of algebras is very rich and closely related to deformation theory, in the sense of Gerstenhaber \cite{ger63}.
The geometric classification is given for many varieties of non-associative algebras (see, for example, \cite{fkkv22,ckls20,fkk21,ak21,afk21,gkp21,fkkv22,ikv19} and references therein).
Degenerations have also been used to study a level of complexity of an algebra~\cite{wolf2,gorb93}.
Section \ref{geo} is devoted to the complete geometric classification of complex $3$-dimensional transposed Poisson algebras.

 \newpage

  \section{The algebraic classification of $3$-dimensional transposed Poisson algebras}
 \label{alg3dim}
   \subsection{Preliminaries}\label{prem}
Although all algebras and vector spaces are considered over the complex field.
The definition of transposed Poisson algebra was given in a paper by Bai, Bai, Guo, and Wu \cite{bai20}.
The concept of $\frac{1}{2}$-derivations as a particular case of $\delta$-derivations was presented in an paper of Filippov \cite{fil1}
(see also \cite{z10} and references therein).

\begin{definition}\label{tpa}
Let ${\mathfrak L}$ be a vector space equipped with two nonzero bilinear operations $-\cdot-$ and $[\cdot,\cdot].$
The triple $({\mathfrak L},\cdot,[\cdot,\cdot])$ is called a transposed Poisson algebra if $({\mathfrak L},\cdot)$ is a commutative associative algebra and
$({\mathfrak L},[\cdot,\cdot])$ is a Lie algebra that satisfies the following compatibility condition
\begin{center}
$2z\cdot [x,y]=[z\cdot x,y]+[x,z\cdot y].$ 
\end{center}
\end{definition}

The last relation  is called the transposed Leibniz rule because the
roles played by the two binary operations in the Leibniz rule in a
Poisson algebra are switched. Further, the resulting operation is
rescaled by introducing a factor 2 on the left-hand side.

\begin{definition}\label{12der}
Let $({\mathfrak L}, [\cdot,\cdot])$ be an algebra with multiplication $[\cdot,\cdot],$ $\varphi$ be a linear map
and $\phi$ be a bilinear map.
Then $\varphi$ is a $\frac{1}{2}$-derivation if it satisfies
\begin{center}
$\varphi[x,y]= \frac{1}{2} \big([\varphi(x),y]+ [x, \varphi(y)] \big);$
\end{center}
 $\phi$ is a $\frac{1}{2}$-biderivation if it satisfies
\begin{longtable}{crl}
$\phi([x,y],z)$&$=$&$ \frac{1}{2} \big( 
 [\phi(x,z),y] +[x,\phi(y,z)]\big),$\\ 
$\phi(x,[y,z])$&$=$&$ \frac{1}{2} \big([\phi(x,y),z]+ 
[y,\phi(x,z)] \big).$
\end{longtable}

\end{definition}

Summarizing Definitions \ref{tpa} and \ref{12der}, we have the following key Remark.
\begin{remark}\label{glavlem}
Let $({\mathfrak L},\cdot,[\cdot,\cdot])$ be a transposed Poisson algebra 
and $z$ an arbitrary element from ${\mathfrak L}.$
Then the right multiplication $R_z$ in the associative commutative algebra $({\mathfrak L},\cdot)$ gives a $\frac{1}{2}$-derivation of the Lie algebra $({\mathfrak L}, [\cdot,\cdot])$
and $-\cdot-$ gives a $\frac{1}{2}$-biderivation of $({\mathfrak L},[\cdot,\cdot])$
 satisfying the
identities $x \cdot y = y \cdot x$ and $x \cdot (y\cdot z) = (x \cdot y)\cdot z$ for any $x, y, z.$ 
Reciprocally, for any $\frac{1}{2}$-biderivation $D: {\mathfrak L} \times {\mathfrak L} \to {\mathfrak L}$ 
satisfying $D(x, y) = D(y, x)$ and 
$D(x, D(y, z)) = D(D(x, y), z)$  one gets a Poisson transposed algebra in an obvious way.
\end{remark}

The main example of $\frac{1}{2}$-derivations is the multiplication by an element from the ground field.
Let us call such $\frac{1}{2}$-derivations as trivial
$\frac{1}{2}$-derivations.
As a consequence of the following Remark,
we are not interested in trivial $\frac{1}{2}$-derivations.

\begin{remark}\label{princth}
Let ${\mathfrak L}$ be a Lie algebra without non-trivial $\frac{1}{2}$-derivations.
Then every transposed Poisson algebra structure defined on ${\mathfrak L}$ is trivial.
\end{remark}

\subsection{Isomorphism problem for transposed Poisson algebras on a certain Lie algebra}

\begin{definition} 
Let $({\mathfrak L}_1, \cdot_1, [\cdot,\cdot]_1)$ and $({\mathfrak L}_2, \cdot_2, [\cdot,\cdot]_2)$ be two transposed Poisson algebras.
Then $({\mathfrak L}_1, \cdot_1, [\cdot,\cdot]_1)$ and $({\mathfrak L}_2, \cdot_2, [\cdot,\cdot]_2)$ are isomorphic if and only if there exists
a liner map $\varphi$  such that 
\begin{center}
    $\varphi([x,y]_1)=[\varphi(x),\varphi(y)]_2, \ \varphi(x\cdot_1 y)=\varphi(x) \cdot_2 \varphi(y).$
\end{center}
\end{definition} 
 
Our main strategy for classifying all non-isomorphic transposed Poisson algebra structures on a certain Lie algebra 
 $({\mathfrak L}, [\cdot,\cdot])$ is as follows.
\begin{enumerate}
\item Find all automorphisms ${\rm Aut}({\mathfrak L}, [\cdot,\cdot])$. 
\item Consider the multiplication table of $({\mathfrak L}, \cdot )$ under the action of elements from  ${\rm Aut}({\mathfrak L}, [\cdot,\cdot])$ and separate all non-isomorphic cases.
\end{enumerate}

\subsection{$\frac{1}{2}$-derivations of $3$-dimensional Lie algebras 
and transposed Poisson   algebras} 
To describe all $3$-dimensional transposed Poisson algebras we are using the standard way:
obtain the classification of all $\frac{1}{2}$-derivations of $3$-dimensional Lie algebras 
and construct all possible transposed Poisson structures on these algebras by using Remark \ref{glavlem}.

\subsubsection{Classification of $3$-dimensional Lie algebras}
In the subsequent result, the classification of $3$-dimensional complex Lie algebras is recalled (the presented classification was used in \cite{ikv19}).

\begin{theorem}\label{theoclassificLie}
Let $({\mathfrak L}, [\cdot,\cdot])$ be a $3$-dimensional complex nonzero Lie algebra,
then one and only one of the following possibilities holds up to isomorphism:
\begin{longtable*}{cclll}

$\mathfrak{h}$ & $:$&
    $[ e_1 , e_2 ] = e_3,$\\

$\mathfrak{g}_1$ & $:$&
    $[e_1, e_{3}]=e_1,$ &  $[e_2, e_{3}]=e_2,$\\

$\mathfrak{g}_2^\alpha$ &$:$& 
    $[e_1, e_{3}]=e_1+e_2,$ & $[e_2, e_3]= \alpha e_2,$\\
    
$\mathfrak{sl}_2$ & $:$&
    $[e_1, e_{2}]=e_3,$ &  $[e_{1}, e_3]=-e_2,$ & $[e_2, e_{3}]=e_1.$

\end{longtable*}

Between these algebras, the only non-trivial isomorphisms are $\mathfrak{g}_2^\alpha\cong \mathfrak{g}_2^\beta$ if and only if $\alpha=\beta^{-1}$.
 
\end{theorem}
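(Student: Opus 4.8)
The plan is to stratify the classification by $d = \dim \mathfrak{L}'$, where $\mathfrak{L}' = [\mathfrak{L},\mathfrak{L}]$ is the derived subalgebra; since ``nonzero'' is read here as ``non-abelian'' (the bracket is one of the two nonzero operations of Definition \ref{tpa}, and no abelian algebra occurs in the list), we have $d \in \{1,2,3\}$. The case $d=3$ I would dispose of first: then $\mathfrak{L}$ is perfect, so its solvable radical $\mathfrak{r}$ is a proper ideal; as there is no semisimple complex Lie algebra of dimension $1$ or $2$, semisimplicity of $\mathfrak{L}/\mathfrak{r}$ forces $\mathfrak{r}=0$. Hence $\mathfrak{L}$ is $3$-dimensional semisimple, therefore simple, and so isomorphic to $\mathfrak{sl}_2$.

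For $d=2$ the algebra $\mathfrak{L}$ is solvable, so $\mathfrak{L}'$ is nilpotent; a $2$-dimensional nilpotent Lie algebra is abelian, and I would fix a basis $e_1,e_2$ of $\mathfrak{L}'$ together with any $e_3 \notin \mathfrak{L}'$. Because $\mathfrak{L}'$ is abelian, every bracket is captured by $A := \mathrm{ad}_{e_3}|_{\mathfrak{L}'}$, and $\mathfrak{L}' = [\mathfrak{L},\mathfrak{L}] = \operatorname{im} A$ forces $A \in \mathrm{GL}(\mathfrak{L}')$. Since $\mathfrak{L}'$ is characteristic, an isomorphism restricts to some $g \in \mathrm{GL}(\mathfrak{L}')$ and sends $e_3 \mapsto c\,e_3 + (\text{element of } \mathfrak{L}')$ with $c \neq 0$, so two such data define isomorphic algebras exactly when the operators are conjugate up to the rescaling $A \mapsto cA$. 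Running through the Jordan forms of an invertible $2\times 2$ complex matrix modulo conjugacy and nonzero scaling yields three normal forms: the scalar operator, giving $\mathfrak{g}_1$; a single non-scalar Jordan block, giving $\mathfrak{g}_2^{1}$; and a diagonalizable operator with distinct eigenvalues normalized to $1,\alpha$ (where $\alpha \neq 0,1$), giving $\mathfrak{g}_2^{\alpha}$. The freedom in choosing which eigenvalue is scaled to $1$ is precisely what produces the identification $\mathfrak{g}_2^{\alpha} \cong \mathfrak{g}_2^{\alpha^{-1}}$.

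For $d=1$, write $\mathfrak{L}' = \langle z \rangle$. If $z$ is central, then $\mathfrak{L}$ is a central extension of the abelian $2$-dimensional algebra, and choosing preimages of a basis of $\mathfrak{L}/\langle z\rangle$ gives $[e_1,e_2]=z$, namely the Heisenberg algebra $\mathfrak{h}$. If $z$ is not central, the adjoint action on $\langle z\rangle$ is a nonzero functional $\chi$; picking $e_3$ with $\chi(e_3)=1$ (so that $[e_3,z]=z$) and then adjusting a complementary vector to lie in the centre, I would reduce $\mathfrak{L}$ to $\mathbb{C} \oplus \mathfrak{r}_2$, with $\mathfrak{r}_2$ the non-abelian $2$-dimensional algebra; this is exactly $\mathfrak{g}_2^{0}$ (with $z = e_1+e_2$ and $e_2$ central).

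Finally, to see the list is irredundant I would separate the classes by invariants: $\dim \mathfrak{L}'$ distinguishes $\mathfrak{sl}_2$ ($d=3$), the family $\mathfrak{g}_1, \mathfrak{g}_2^{\alpha}$ with $\alpha\neq 0$ ($d=2$), and the pair $\mathfrak{h}, \mathfrak{g}_2^{0}$ ($d=1$); within $d=1$, the algebra $\mathfrak{h}$ is nilpotent (its $\mathfrak{L}'$ is central) whereas $\mathfrak{g}_2^{0}$ is not; within $d=2$, the conjugacy-plus-scaling class of $A$ separates $\mathfrak{g}_1$ (scalar $A$) from every $\mathfrak{g}_2^{\alpha}$ (non-scalar $A$), and among the $\mathfrak{g}_2^{\alpha}$ the unordered eigenvalue-ratio $\{\alpha,\alpha^{-1}\}$ is the complete invariant. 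I expect the main obstacle to be the $d=2$ case: both the reduction to an invertible operator modulo conjugacy and scaling, and the verification that the eigenvalue-ratio up to inversion is a \emph{complete} isomorphism invariant (the ``only if'' direction of $\mathfrak{g}_2^{\alpha}\cong\mathfrak{g}_2^{\beta}\Leftrightarrow\alpha=\beta^{-1}$) rest on the careful bookkeeping of how any isomorphism is forced to preserve $\mathfrak{L}'$ and to act on it.
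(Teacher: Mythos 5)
Your argument is correct, but note that the paper does not actually prove this theorem: it is stated as a recollection of the known classification of $3$-dimensional complex Lie algebras (with a pointer to where this presentation was previously used), so there is no internal proof to match. What you supply is the standard self-contained derivation, stratifying by $d=\dim[\mathfrak{L},\mathfrak{L}]$: the $d=3$ stratum is killed by the absence of semisimple algebras in dimensions $1,2$, giving $\mathfrak{sl}_2$; the $d=2$ stratum reduces, via nilpotency of the derived subalgebra of a solvable algebra (Lie's theorem over $\mathbb{C}$), to classifying an invertible operator $A=\mathrm{ad}_{e_3}|_{\mathfrak{L}'}$ on the abelian ideal $\mathfrak{L}'$ up to conjugacy and nonzero scaling, whose three Jordan-type normal forms are exactly $\mathfrak{g}_1$, $\mathfrak{g}_2^{1}$ and $\mathfrak{g}_2^{\alpha}$ ($\alpha\neq 0,1$), with the scaling freedom producing precisely $\mathfrak{g}_2^{\alpha}\cong\mathfrak{g}_2^{\alpha^{-1}}$; and the $d=1$ stratum splits into the central case ($\mathfrak{h}$) and the non-central case ($\mathbb{C}\oplus\mathfrak{r}_2\cong\mathfrak{g}_2^{0}$, correctly identified via $z=e_1+e_2$). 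I verified the key computation: an isomorphism preserves the characteristic ideal $\mathfrak{L}'$ and sends $e_3\mapsto c e_3'+w$ with $w\in\mathfrak{L}'$, forcing $gAg^{-1}=cB$, so the unordered eigenvalue ratio $\{\alpha,\alpha^{-1}\}$ together with (non-)scalarity is indeed a complete invariant in the $d=2$ stratum, and your invariants ($\dim\mathfrak{L}'$, nilpotency for $d=1$) separate the remaining classes, matching the paper's isomorphism statement exactly (in particular $\mathfrak{g}_2^{0}$, living in the $d=1$ stratum, is isomorphic to no $\mathfrak{g}_2^{\alpha}$ with $\alpha\neq 0$). Two steps are asserted rather than proved and would need a citation or a short classical argument in a fully rigorous write-up: that the unique $3$-dimensional simple complex Lie algebra is $\mathfrak{sl}_2$, and the Lie-theorem corollary that $[\mathfrak{L},\mathfrak{L}]$ is nilpotent for solvable $\mathfrak{L}$; both are standard, so these are presentation gaps, not mathematical ones. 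What your route buys over the paper's bare citation is a verification of the specific normal forms and, importantly, of the ``only non-trivial isomorphisms'' clause, which the paper's later propositions rely on when gluing the families ${\rm T}_{09}$--${\rm T}_{11}$ across $\alpha\leftrightarrow\alpha^{-1}$.
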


Ferreira, Kaygorodov and Lopatkin proved that there 
are no non-trivial transposed Poisson algebra structures defined on a complex semisimple finite-dimensional Lie algebra, result which applies to  $\mathfrak{sl}_2$. Denote by ${\rm T}_{01}$ the trivial transposed Poisson algebra defined on $\mathfrak{sl}_2$. The transposed Poisson algebra structures defined on $\mathfrak{h}$ 
were studied, using $\frac{1}{2}$-biderivations, by  Yuan and Hua in \cite[Theorem 4.5]{yh21}, obtaining the algebras:
\begin{enumerate}
    \item ${\rm T}_{02}:\left\{ 
\begin{tabular}{l}
$e_2 \cdot  e_2 = e_3,$\\ 
$[ e_1 , e_2 ] = e_3.$ 
\end{tabular}%
\right. $

    \item ${\rm T}_{03}^{\alpha}:\left\{ 
\begin{tabular}{l}
$e_1 \cdot  e_2 = \alpha e_3,$ \\ 
$[ e_1 , e_2 ] = e_3.$
\end{tabular}%
\right. $

    \item ${\rm T}_{04}^{\alpha}:\left\{ 
\begin{tabular}{l}
$e_1 \cdot e_2 = \alpha e_3,  e_2 \cdot e_2  = e_1,$ \\ 
$[ e_1 , e_2 ] = e_3.$
\end{tabular}%
\right. $

    \item ${\rm T}_{05}:\left\{ 
\begin{tabular}{l}
$e_1 \cdot e_1 = e_3,  e_1 \cdot e_2 = e_1,  e_2 \cdot e_2  = e_2,  e_2 \cdot e_3 = e_3,$ \\ 
$[ e_1 , e_2 ] = e_3.$
\end{tabular}%
\right. $

    \item ${\rm T}_{06}:\left\{ 
\begin{tabular}{l}
$e_1 \cdot e_2= e_1, e_2 \cdot e_2 = e_2 , e_2 \cdot e_3= e_3,$ \\ 
$[ e_1 , e_2 ] = e_3.$
\end{tabular}%
\right. $
\end{enumerate}

Hence, only the classification of the transposed Poisson algebras defined on $\mathfrak{g}_1$ and $\mathfrak{g}_2^\alpha$ are missing.
    
\subsubsection{Description of transposed Poisson algebra structure defined on   $\mathfrak{g}_1$}

\begin{remark}\label{propderiv3dim2Lie}
  Let $\varphi$ be a $\frac{1}{2}$-derivation of $\mathfrak{g}_1$. Then 
$$\varphi(e_1)=\beta_{33} e_1, \ \varphi(e_2)= \beta_{33} e_2, \ \varphi(e_3)= \beta_{31} e_1 + \beta_{32} e_2 + \beta_{33} e_3.      
$$
\end{remark}

\begin{remark}\label{autg1}
  Let $\phi$ be an automorphism of $\mathfrak{g}_1$. Then 
$$
\phi(e_1)=\lambda_{11} e_1+\lambda_{21} e_2, \ 
\phi(e_2)=\lambda_{12} e_1+\lambda_{22} e_2, \ 
 \ \phi(e_3)= \lambda_{13} e_1+\lambda_{23} e_2+e_3,      
$$
where $\lambda_{11}\lambda_{22}\neq \lambda_{21}\lambda_{12}.$
\end{remark}

\begin{proposition} \label{TPAmult4}
Let $(\mathfrak{L}, \cdot, [\cdot,\cdot])$ be a transposed Poisson algebra structure defined on   $\mathfrak{g}_1$.
Then $(\mathfrak{L}, \cdot, [\cdot,\cdot])$ is not a Poisson algebra and it is isomorphic to only one of the following algebras:
\begin{enumerate}[(1)]
\item ${\rm T}_{07}^{\alpha}:\left\{ 
\begin{tabular}{l}
$e_1 \cdot e_3 = \alpha e_1$, $e_2 \cdot e_3 = \alpha e_2$, $e_3 \cdot e_3 = \alpha e_3,$ \\ 
$[e_1, e_{3}]=e_1,$  $[e_2, e_{3}]=e_2.$%
\end{tabular}%
\right. $
 
\item ${\rm T}_{08}:\left\{ 
\begin{tabular}{l}
$e_3 \cdot e_3 = e_1,$ \\ 
$[e_1, e_{3}]=e_1,$  $[e_2, e_{3}]=e_2.$%
\end{tabular}%
\right. $
\end{enumerate}
where the parameter $\alpha\in \mathbb{C}$.
\end{proposition}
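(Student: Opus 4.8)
The plan is to build every such structure from its right multiplications and then reduce modulo automorphisms. First I would invoke Remark~\ref{glavlem}: for a transposed Poisson structure $\cdot$ on $\mathfrak{g}_1$, each right multiplication $R_{e_i}\colon x\mapsto e_i\cdot x$ is a $\frac{1}{2}$-derivation of $\mathfrak{g}_1$, so by Remark~\ref{propderiv3dim2Lie} it has the form $R_{e_i}(e_1)=b_i e_1$, $R_{e_i}(e_2)=b_i e_2$, $R_{e_i}(e_3)=c_i e_1 + d_i e_2 + b_i e_3$ for scalars $b_i,c_i,d_i$. Imposing commutativity $e_i\cdot e_j=e_j\cdot e_i$ on the resulting table forces $b_1=b_2=0$, $d_1=c_2=0$ and $c_1=d_2=b_3=:\alpha$, so that the multiplication reduces to
\begin{center}
$e_1\cdot e_3=\alpha e_1,\quad e_2\cdot e_3=\alpha e_2,\quad e_3\cdot e_3=c_3 e_1+d_3 e_2+\alpha e_3,$
\end{center}
all remaining products of basis vectors being zero, with $\alpha,c_3,d_3$ free.

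Next I would check associativity. Writing $V=\langle e_1,e_2\rangle$ and $w=c_3 e_1+d_3 e_2$, the table reads $V\cdot V=0$, $v\cdot e_3=\alpha v$ for $v\in V$, and $e_3\cdot e_3=w+\alpha e_3$. A short case analysis over triples of basis vectors shows associativity holds identically: the only nonroutine identity, $(v\cdot e_3)\cdot e_3=v\cdot(e_3\cdot e_3)$, reduces to $\alpha^2 v=\alpha^2 v$ using $v\cdot w=0$. Hence no new constraints appear and $(\alpha,c_3,d_3)\in\mathbb{C}^3$ genuinely parametrizes all transposed Poisson structures on $\mathfrak{g}_1$.

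The heart of the argument, and the step I expect to require the most care, is the reduction modulo $\mathrm{Aut}(\mathfrak{g}_1)$. Using Remark~\ref{autg1}, an automorphism $\phi$ restricts to an invertible $A=\phi|_V$ and sends $e_3$ to $e_3+v_0$ with $v_0=\lambda_{13}e_1+\lambda_{23}e_2\in V$. Transporting the structure by $\phi$ (that is, demanding $\phi(x)\cdot'\phi(y)=\phi(x\cdot y)$) and evaluating on $\phi(e_1)\cdot'\phi(e_3)$ and $\phi(e_3)\cdot'\phi(e_3)$ yields the clean transformation rule $\alpha'=\alpha$ and $w'=Aw-\alpha v_0$. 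From this the orbits are immediate: if $\alpha\neq 0$ one picks $v_0=\alpha^{-1}Aw$ to get $w'=0$, landing on ${\rm T}_{07}^{\alpha}$; if $\alpha=0$ then $w'=Aw$, so a nonzero $w$ normalises (by transitivity of the $GL_2$-action on nonzero vectors) to $w=e_1$, giving ${\rm T}_{08}$, while $w=0$ gives ${\rm T}_{07}^{0}$.

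Finally, since $\alpha$ is an invariant, the algebras ${\rm T}_{07}^{\alpha}$ are pairwise non-isomorphic and distinct from ${\rm T}_{08}$ (which has $\alpha=0$), and ${\rm T}_{08}$ is separated from ${\rm T}_{07}^{0}$ because a nonzero $w$ cannot be mapped to $0$ by $A\in GL_2$ once $\alpha=0$. The non-Poisson claim I would settle by a direct test of the genuine Leibniz rule $[x,y\cdot z]=[x,y]\cdot z+y\cdot[x,z]$: on the triple $(e_1,e_3,e_3)$ it gives $\alpha e_1$ on the left and $2\alpha e_1$ on the right, and on $(e_3,e_3,e_3)$ it gives $-w$ on the left and $0$ on the right, so the identity fails precisely when the multiplication is nonzero, covering ${\rm T}_{07}^{\alpha}$ for $\alpha\neq 0$ and ${\rm T}_{08}$.
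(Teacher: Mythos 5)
Your proof is correct and follows essentially the same route as the paper: derive the commutative product from the $\frac{1}{2}$-derivations of Remark~\ref{propderiv3dim2Lie} together with the symmetry constraints of Remark~\ref{glavlem}, check associativity, and then normalise the pair $(\alpha, w)$ under the automorphisms of Remark~\ref{autg1} --- your transformation rule $w' = Aw - \alpha v_0$ is just a coordinate-free repackaging of the paper's explicit computation with the $\lambda_{ij}$. You in fact go slightly beyond the paper, whose written proof never addresses the ``not a Poisson algebra'' claim: your Leibniz-rule check on the triples $(e_1,e_3,e_3)$ and $(e_3,e_3,e_3)$ supplies that missing verification, correctly noting that the rule fails exactly when $\cdot \neq 0$, which is consistent with Definition~\ref{tpa} requiring both operations to be nonzero.
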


\begin{proof}
We aim to describe the multiplication $-\cdot-$. By Remark \ref{glavlem}, for every $e_k$ there is an associated $\frac{1}{2}$-derivation $\varphi_k$ of $(\mathfrak{L}, [\cdot,\cdot])$ such that $\varphi_j(e_i)=e_i \cdot e_j = \varphi_i(e_j)$. From Remark \ref{propderiv3dim2Lie}, 
\begin{center}
    $\varphi_i(e_1) = \beta_{33}^i e_1$, $\varphi_i(e_2) = \beta_{33}^i e_2$, $\varphi_i(e_3) = \beta_{31}^i e_1 +\beta_{32}^i e_2 + \beta_{33}^i e_3$
\end{center} and
\begin{longtable*}{rcccccl}
$\beta_{33}^1 e_2$ & $= $& $\varphi_1 (e_2)$ & $=$ & $\varphi_2(e_1)$ &$ =$& $\beta_{33}^2 e_1$,  \\
$\beta_{31}^1 e_1 + \beta_{32}^1 e_2 +\beta_{33}^1 e_3$ & $=$ & $\varphi_1(e_3)$ & $=$ & $\varphi_3(e_1)$ & $=$ & $\beta_{33}^3 e_1,$ \\
$\beta_{33}^3 e_2$ & $= $& $\varphi_2(e_3)$ & $=$ & $\varphi_3(e_2)$ & $=$ & $\beta_{31}^2 e_1 + \beta_{32}^2 e_2 +\beta_{33}^2 e_3.$ 
\end{longtable*}
Hence, the commutative multiplication $-\cdot-$ is defined by
\begin{longtable*}{lcl}
$e_1 \cdot e_3$ & $=$ & $\beta_{33}^3 e_1,$ \\
$e_2 \cdot e_3$ & $=$ & $\beta_{33}^3 e_2,$ \\
$e_3 \cdot e_3$ & $=$ & $\beta_{31}^3 e_1 + \beta_{32}^3 e_2 +\beta_{33}^3 e_3.$
\end{longtable*}
Through straightforward calculations, it is possible to conclude that $-\cdot-$ is associative too.
Let us now separate all non-isomorphic cases.
Under the action of an automorphism of the Lie algebra $(\mathfrak{L},  [\cdot,\cdot]),$
given in Remark \ref{autg1}, we rewrite the multiplication table of $(\mathfrak{L}, \cdot)$ by the following way:

\begin{longtable*}{lcl}
$e_1 \cdot e_3$ & $=$ & $\beta_{33}^3 e_1,$ \\
$e_2 \cdot e_3$ & $=$ & $\beta_{33}^3 e_2,$ \\
$e_3 \cdot e_3$ & $=$ & 
$\frac{\beta^3_{31}\lambda_{22}-\beta^3_{32}\lambda_{12}+
\beta^3_{33}\lambda_{13}\lambda_{22}-\beta^3_{33}\lambda_{12}\lambda_{23}}{\lambda_{11}\lambda_{22}- \lambda_{21}\lambda_{12}} e_1 - 
\frac{\beta^3_{31}\lambda_{21}-\beta^3_{32}\lambda_{11}+
\beta^3_{33}\lambda_{13}\lambda_{21}-\beta^3_{33}\lambda_{11}\lambda_{23}}{\lambda_{11}\lambda_{22}- \lambda_{21}\lambda_{12}} e_2 +\beta_{33}^3 e_3.$
\end{longtable*}

Suppose $(\beta_{31}^3,\beta_{32}^3,\beta_{33}^3)\neq 0$, otherwise we have the zero algebra.
Let us consider two cases.
\begin{enumerate}

    \item  If
    $\beta_{33}^3\neq0,$ then by choosing 
   \begin{center} $\lambda_{13}=-\frac{\beta_{31}^3}{\beta_{33}^3},$
    $\lambda_{23}=-\frac{\beta_{32}^3}{\beta_{33}^3},$
    $\lambda_{11}=\lambda_{22}=1$ and $\lambda_{12}=\lambda_{21}=0,$ \end{center}we have

\begin{longtable*}{lcllcllcl}
$e_1 \cdot e_3$ & $=$ & $\beta_{33}^3 e_1,$ &
$e_2 \cdot e_3$ & $=$ & $\beta_{33}^3 e_2,$ &
$e_3 \cdot e_3$ & $=$ & $\beta_{33}^3 e_3.$
\end{longtable*} 
           \item  If
    $\beta_{33}^3=0$ and $(\beta_{31}^3,\beta_{32}^3)\neq 0,$ then by choosing some suitable  $\lambda_{ij}$ we have 
    \begin{longtable*}{lcllcllcl}
$e_3 \cdot e_3$ & $=$ & $ e_1.$
\end{longtable*}
 
\end{enumerate}

It is easy to see that the obtained algebras are  not isomorphic. From these cases, we obtain the algebras ${\rm T}_{07}^{\alpha}$ and ${\rm T}_{08}$, respectively.
\end{proof}

\subsubsection{Description of transposed Poisson algebra structure defined on   $\mathfrak{g}_2^\alpha$}

\begin{remark}\label{propderiv3dim2Lieb}
Let $\varphi$ be a $\frac{1}{2}$-derivation of $\mathfrak{g}_2^\alpha$. 
\begin{enumerate}
\item If $\alpha   \neq 0, \frac{1}{2}, 2$, then
\begin{center} 
$\varphi(e_1)=\beta_{33}e_1, \ \varphi(e_2)= \beta_{33}e_2, \ \varphi(e_3)=\beta_{31}e_1+\beta_{32}e_2+\beta_{33}e_3.$ 
\end{center}
\item  If $\alpha=\frac{1}{2}$, then
\begin{center}
$\varphi(e_1)=(\beta_{33}-2\beta_{21}) e_1 -4\beta_{21}e_2, \ \varphi(e_2)= \beta_{21}e_1 + (2\beta_{21}+\beta_{33})e_2,$ \end{center}
\begin{center}
$\varphi(e_3)=\beta_{31}e_1+\beta_{32}e_2+\beta_{33}e_3.$
\end{center}
\item If $\alpha=2$, then
\begin{center}
$\varphi(e_1)=\beta_{33}e_1 + \beta_{12} e_2, \ \varphi(e_2)= \beta_{33}e_2,   \ \varphi(e_3)=\beta_{31}e_1+\beta_{32}e_2+\beta_{33}e_3.$
\end{center}
\item If $\alpha=0$, then
\begin{center} 
$\varphi(e_1)=\beta_{33}e_1 -(\beta_{22}-\beta_{33})e_2, \ \varphi(e_2)= \beta_{22}e_2,   \ \varphi(e_3)=\beta_{31}e_1+\beta_{32}e_2+\beta_{33}e_3.$
\end{center}

\end{enumerate}
\end{remark}

\begin{proof}
Let $\varphi(e_i)=\beta_{i1} e_1 + \beta_{i2} e_2 + \beta_{i3} e_3$. From here and
\begin{longtable*}{rcl}
$0$& $=$&$ [\varphi(e_1), e_2] +[e_1, \varphi(e_2)]  = \beta_{23} e_1 -  (\alpha\beta_{13} - \beta_{23}) e_2,$\\

$2\varphi(e_1+e_2)$&$=$&$   [\varphi(e_1), e_3] +[e_1, \varphi(e_3)]  = 
(\beta_{11}+\beta_{33}) e_1 + (\beta_{11} + \alpha \beta_{12}+ \beta_{33}) e_2,$\\

$2\alpha \varphi( e_2)$&$=$&$  [\varphi(e_2), e_3] +[e_2, \varphi(e_3)]  = 
 \beta_{21} e_1 +  (\beta_{21} + \alpha \beta_{22}+ \alpha \beta_{33}) e_2,$
\end{longtable*}
we obtain the following system of equations
\begin{equation*}
\left\{
\begin{array}{lll}
\beta_{23}= 0 \\
\beta_{13}= 0 \\ 
\beta_{11} + \beta_{33}= 2(\beta_{11} + \beta_{21}) \\
\beta_{11} + \alpha \beta_{12} + \beta_{33}=2( \beta_{12} + \beta_{22}) \\
2\alpha \beta_{21} = \beta_{21} \\
2\alpha \beta_{22} = \beta_{21} + \alpha\beta_{22}+\alpha\beta_{33}\\
\end{array}
\right. .
\end{equation*} 
That gives the proof of the statement.
\end{proof}

\begin{remark}\label{autg2}
  Let $\phi$ be an automorphism of $\mathfrak{g}_2^\alpha$. 
   
\begin{enumerate}
    \item If $\alpha\neq  -1,$ then 
\begin{center}$
\phi(e_1)=\lambda_{11} e_1+\lambda_{21} e_2, \ 
\phi(e_2)= (\lambda_{11} + \lambda_{21}(\alpha-1)) e_2, \ 
 \ \phi(e_3)= \lambda_{13} e_1+\lambda_{23} e_2+e_3,      
$\end{center}
where $\lambda_{11}(\lambda_{11} + \lambda_{21}(\alpha-1))\neq0.$

  \item If $\alpha= -1,$ then 
\begin{center}$
\phi(e_1)=\lambda_{11} e_1+\lambda_{21} e_2, \ 
\phi(e_2)= (\lambda_{11} -2 \lambda_{21}) e_2, \ 
 \ \phi(e_3)= \lambda_{13} e_1+\lambda_{23} e_2+e_3,            
$\end{center}
where $\lambda_{11}(\lambda_{11} -2 \lambda_{21})\neq0;$
or 
\begin{center}$
\phi(e_1)=\lambda_{11} e_1 + \lambda_{21}e_2, \ 
\phi(e_2)=-2\lambda_{11} e_1-\lambda_{11}e_2, \ 
 \ \phi(e_3)= \lambda_{13} e_1+\lambda_{23} e_2-e_3,      
$\end{center}
where $\lambda_{11}^2\neq 2\lambda_{11}\lambda_{21}.$

\end{enumerate}

\end{remark}

We aim to describe the multiplication $-\cdot-$. By Remark \ref{glavlem}, for every $e_k$ there is an associated $\frac{1}{2}$-derivation $\varphi_k$ of $(\mathfrak{L}, [\cdot,\cdot])$ such that $\varphi_j(e_i)=e_i \cdot e_j = \varphi_i(e_j)$. 
Now we have to consider $3$ cases, by Remark \ref{propderiv3dim2Lieb} and using that $\mathfrak{g}_2^{\frac{1}{2}} \cong \mathfrak{g}_2^2$.

\begin{proposition}
Let $(\mathfrak{L}, \cdot, [\cdot,\cdot])$ be a transposed Poisson algebra structure defined on   $\mathfrak{g}_2^{\alpha\neq 0,\frac{1}{2},2}$.
Then $(\mathfrak{L}, \cdot, [\cdot,\cdot])$ is isomorphic to only one of the following algebras:
\begin{enumerate}

 \item ${\rm T}_{09}^{\alpha\neq0, \frac{1}{2},2, \beta}:\left\{ 
\begin{tabular}{l}
$ e_1 \cdot e_3= \beta  e_1, \ e_2 \cdot e_3 = \beta  e_2,\  e_3 \cdot e_3 = \beta e_3,$ \\ 
 $[e_1, e_{3}]=e_1+e_2,$ $[e_2, e_3]= \alpha e_2.$
\end{tabular}%
\right. $

 \item ${\rm T}_{10}^{\alpha\neq0,\frac{1}{2},2}:\left\{ 
\begin{tabular}{l}
$ e_3 \cdot e_3 = e_2,$ \\ 
 $[e_1, e_{3}]=e_1+e_2,$ $[e_2, e_3]= \alpha e_2.$
\end{tabular}%
\right. $

 \item ${\rm T}_{11}^{\alpha\neq0,\frac{1}{2},2}:\left\{ 
\begin{tabular}{l}
$ e_3 \cdot e_3 = e_1,$ \\ 
 $[e_1, e_{3}]=e_1+e_2,$ $[e_2, e_3]= \alpha e_2.$
\end{tabular}%
\right. $

 \end{enumerate}
 where the parameter $\beta\in \mathbb{C}$. Between these algebras there are precisely the following non-trivial isomorphisms:
 \begin{itemize}
     \item ${\rm T}_{09}^{\alpha_1, \beta_1}\cong {\rm T}_{09}^{\alpha_2, \beta_2}$ if and only if $(\alpha_2, \beta_2)=(\alpha_1, \beta_1)$ or  $(\alpha_2, \beta_2)=(\frac{1}{\alpha_1}, \frac{\beta_1}{\alpha_1})$.
     
     \item ${\rm T}_{11}^{\beta_1}\cong {\rm T}_{11}^{\beta_2}$ if and only if $\beta_2=\beta_1$ or $\beta_2=\frac{1}{\beta_1}$.

 \end{itemize}
\end{proposition}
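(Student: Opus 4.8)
The plan is to reuse the two-step method of Proposition \ref{TPAmult4}: first pin down the commutative associative product $-\cdot-$ from the $\frac{1}{2}$-derivations via Remark \ref{glavlem}, then cut the resulting family into orbits under $\mathrm{Aut}(\mathfrak{g}_2^\alpha)$. For $\alpha\neq 0,\frac12,2$ the shape of a $\frac12$-derivation in Remark \ref{propderiv3dim2Lieb}(1) is identical to that for $\mathfrak{g}_1$ in Remark \ref{propderiv3dim2Lie}, so imposing $\varphi_j(e_i)=\varphi_i(e_j)$ runs verbatim and produces
\[
e_1\cdot e_3=c\,e_1,\qquad e_2\cdot e_3=c\,e_2,\qquad e_3\cdot e_3=a\,e_1+b\,e_2+c\,e_3,
\]
with $a=\beta_{31}^3,\ b=\beta_{32}^3,\ c=\beta_{33}^3$ and all remaining products zero; associativity is then a direct check, and I discard $(a,b,c)=0$.

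Next I would split on $c$. If $c\neq 0$, the two free parameters $\lambda_{13},\lambda_{23}$ in Remark \ref{autg2}(1) let me kill the $e_1$- and $e_2$-components of $e_3\cdot e_3$ (taking $\lambda_{11}=1,\lambda_{21}=0$ and solving $\lambda_{13}=-a/c$, then $\lambda_{23}=-b/c$), normalizing to $e_3\cdot e_3=c\,e_3$; since every automorphism fixes the $e_3$-coefficient ($\phi(e_3)=\dots+e_3$), the scalar $c$ cannot be rescaled and becomes the genuine modulus $\beta$ of ${\rm T}_{09}^{\alpha,\beta}$. If $c=0$, the structure is encoded by the vector $v=a\,e_1+b\,e_2\in\langle e_1,e_2\rangle$, and a short computation shows that an automorphism moves $v$ by the restriction of $\mathrm{Aut}(\mathfrak{g}_2^\alpha)$ to $\langle e_1,e_2\rangle$, i.e. by the lower-triangular group $\big\{\left(\begin{smallmatrix}\lambda_{11}&0\\\lambda_{21}&\mu\end{smallmatrix}\right):\mu=\lambda_{11}+\lambda_{21}(\alpha-1)\big\}$. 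So I must describe the orbits of this group on $\langle e_1,e_2\rangle\setminus\{0\}$.

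The orbit analysis is the heart of the argument, and it is here that the difference with the $\mathfrak{g}_1$ case (where $\mathrm{Aut}$ restricts to all of $GL_2$) appears: there are now exactly three orbits, namely the line $\langle e_2\rangle$ (invariant since $\phi(e_2)\in\langle e_2\rangle$), the eigenline $\langle w\rangle$ with $w=(1-\alpha)e_1+e_2$ (the $\mathrm{ad}(e_3)$-eigenvector not proportional to $e_2$, which one checks is also invariant), and the generic open orbit $\{a\neq 0,\ a+(\alpha-1)b\neq 0\}$ with representative $e_1$. The generic orbit yields ${\rm T}_{11}^\alpha$ and $\langle e_2\rangle$ yields ${\rm T}_{10}^\alpha$. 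The hard part is the third orbit $\langle w\rangle$: within $\mathrm{Aut}(\mathfrak{g}_2^\alpha)$ it is genuinely separate, so at face value it would give a fourth algebra. I expect to dispose of it by invoking the Lie isomorphism $\mathfrak{g}_2^\alpha\cong\mathfrak{g}_2^{1/\alpha}$ of Theorem \ref{theoclassificLie}: constructing it explicitly (essentially $e_3\mapsto\frac1\alpha e_3$, which interchanges the two $\mathrm{ad}(e_3)$-eigenlines), one verifies it carries the $\langle w\rangle$-structure on $\mathfrak{g}_2^\alpha$ onto the $\langle e_2\rangle$-structure on $\mathfrak{g}_2^{1/\alpha}$, so the would-be fourth orbit is nothing but ${\rm T}_{10}^{1/\alpha}$ and does not enlarge the list. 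The degenerate values $\alpha=1$ (where $w=e_2$) and $\alpha=-1$ (where the extra automorphism of Remark \ref{autg2}(2) already merges $\langle w\rangle$ with $\langle e_2\rangle$) must be flagged as exactly the cases in which $\langle w\rangle$ is not a separate orbit.

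Finally I would read off the isomorphisms. Tracking the same map $\mathfrak{g}_2^\alpha\cong\mathfrak{g}_2^{1/\alpha}$ through each normal form turns the $e_3$-coefficient $\beta$ into $\beta/\alpha$, giving ${\rm T}_{09}^{\alpha_1,\beta_1}\cong{\rm T}_{09}^{\alpha_2,\beta_2}$ precisely when $(\alpha_2,\beta_2)=(\alpha_1,\beta_1)$ or $(1/\alpha_1,\beta_1/\alpha_1)$, and ${\rm T}_{11}^{\alpha_1}\cong{\rm T}_{11}^{\alpha_2}$ precisely when $\alpha_2\in\{\alpha_1,1/\alpha_1\}$ (the generic orbit being preserved by the swap); the ${\rm T}_{10}$ family admits no further identification because its $\alpha\leftrightarrow 1/\alpha$ partner is exactly the already-absorbed $\langle w\rangle$-orbit. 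To close the argument I would separate the three families by cheap invariants — whether $e_1\cdot e_3\neq 0$ (equivalently $\beta\neq 0$) singles out ${\rm T}_{09}$, and whether the one-dimensional image of $-\cdot-$ is an $\mathrm{ad}(e_3)$-eigenline distinguishes ${\rm T}_{10}$ (image $\langle e_2\rangle$) from ${\rm T}_{11}$ (image $\langle e_1\rangle$) — which simultaneously confirms that no listed parameter family collapses further.
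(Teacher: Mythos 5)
Your proposal is correct and follows essentially the same route as the paper: the same determination of $-\cdot-$ from the $\frac{1}{2}$-derivations, the same normalization under ${\rm Aut}(\mathfrak{g}_2^\alpha)$ splitting on $\beta_{33}^3$, and the same resolution of the extra eigenline orbit $\langle (1-\alpha)e_1+e_2\rangle$ (which is precisely the paper's intermediate family ${\rm T}_{10*}^{\alpha}$, absorbed into ${\rm T}_{10}^{1/\alpha}$) together with the ${\rm T}_{09}$ and ${\rm T}_{11}$ identifications via the Lie isomorphism $\mathfrak{g}_2^{\alpha}\cong\mathfrak{g}_2^{1/\alpha}$. Your orbit/eigenline formulation merely repackages the paper's explicit three-case computation ($\alpha\neq -1,0,1,\frac{1}{2},2$; $\alpha=1$; $\alpha=-1$), with the degenerate values $\alpha=\pm1$ flagged exactly where the paper handles them (at $\alpha=-1$ via the second type of automorphism in Remark \ref{autg2}).
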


\begin{proof}
 From Remark \ref{propderiv3dim2Lieb}, 
\begin{center}$\varphi_i(e_1) = \beta_{33}^i e_1$, $\varphi_i(e_2) = \beta_{33}^i e_2$, $\varphi_i(e_3) = \beta_{31}^i e_1 +\beta_{32}^i e_2 + \beta_{33}^i e_3$
\end{center} 
and
\begin{longtable*}{rcccccl}
$\beta_{33}^1 e_2$ & $=$ & $\varphi_1 (e_2)$ & $=$ & $\varphi_2(e_1)$ &$ =$ & $\beta_{33}^2 e_1$,  \\
$\beta_{31}^1 e_1 + \beta_{32}^1 e_2 +\beta_{33}^1 e_3$ & $=$ & $\varphi_1(e_3)$ & $=$ & $\varphi_3(e_1)$ & $=$ & $\beta_{33}^3 e_1,$ \\
$\beta_{33}^3 e_2$ &$= $& $\varphi_2(e_3)$ & $=$ & $\varphi_3(e_2)$ &$ = $& $\beta_{31}^2 e_1 + \beta_{32}^2 e_2 +\beta_{33}^2 e_3.$ \\
\end{longtable*}
Hence, the commutative multiplication $-\cdot-$ is defined by
\begin{eqnarray*}
e_1 \cdot e_3 & = & \beta_{33}^3 e_1, \\
e_2 \cdot e_3 & = & \beta_{33}^3 e_2, \\
e_3 \cdot e_3 & = & \beta_{31}^3 e_1 + \beta_{32}^3 e_2 +\beta_{33}^3 e_3.
\end{eqnarray*}

Through straightforward calculations, it is possible to conclude that $-\cdot-$ is associative too. We are interested in $(\beta_{31}^3,\beta_{32}^3,\beta_{33}^3)\neq 0.$
Let us now separate all non-isomorphic cases.
\medskip
\begin{enumerate}
    \item[\mbox{First,}] 
we will consider only the case $\alpha\neq -1,0,1,\frac{1}{2},2.$
Under the action of an automorphism of the Lie algebra $(\mathfrak{L},  [\cdot,\cdot]),$
given in Remark \ref{autg2}, we rewrite the multiplication table of $(\mathfrak{L}, \cdot)$ by the following way:

\begin{longtable*}{lcl}
$e_1 \cdot e_3$ & $=$ & $\beta_{33}^3 e_1,$ \\
$e_2 \cdot e_3$ & $=$ & $\beta_{33}^3 e_2,$ \\
$e_3 \cdot e_3$ & $=$ & 
$\frac{\beta^3_{31}+\beta^3_{33}\lambda_{13}}{\lambda_{11}} e_1+$\\ &&$
\frac{\beta^3_{31}(\lambda_{11}-\lambda_{22})-\beta^3_{32}\lambda_{11}+\alpha \beta^3_{32}\lambda_{11}+\beta^3_{33}(\lambda_{11}\lambda_{13}-\lambda_{13}\lambda_{22}-\lambda_{11}\lambda_{23})+\alpha \beta^3_{32}\lambda_{11}\lambda_{23}}{(\alpha-1)\lambda_{11}\lambda_{22}} e_2 +\beta_{33}^3 e_3.$
\end{longtable*}

Let us consider the following cases.
\begin{enumerate}
    \item 
    If $\beta_{33}^3\neq0,$ then by choosing 
   \begin{center} $\lambda_{13}=-\frac{\beta_{31}^3}{\beta_{33}^3},$
    $\lambda_{23}=-\frac{\beta_{32}^3}{\beta_{33}^3},$
    $\lambda_{11}=\lambda_{22}=1,$ \end{center}we have

\begin{longtable*}{lcllcllcl}
$e_1 \cdot e_3$ & $=$ & $\beta_{33}^3 e_1,$ &
$e_2 \cdot e_3$ & $=$ & $\beta_{33}^3 e_2,$ &
$e_3 \cdot e_3$ & $=$ & $\beta_{33}^3 e_3.$
\end{longtable*}

       \item 
    If $\beta_{33}^3=0$ and $\beta_{31}^3 = (1-\alpha)\beta_{32}^3,$ 
    then by choosing  $\lambda_{11}= \beta_{32}^3,$ we have 
    \begin{longtable*}{lcllcllcl}
$e_3 \cdot e_3$ & $=$ & $ (1-\alpha) e_1+e_2.$
\end{longtable*}
    
       \item 
    $\beta_{33}^3=0$ and $\beta_{31}^3 \neq (1-\alpha)\beta_{32}^3,$ 
    then by choosing some suitable  $\lambda_{ij},$ we have two following opportunities
\begin{center}
$e_3 \cdot e_3=  e_2;$ or $e_3 \cdot e_3=  e_1.$ 

\end{center}

\end{enumerate}

Denote the corresponding families of transposed Poisson algebras by ${\rm T}_{09}^{\alpha,\beta}$, ${\rm T}_{10*}^{\alpha}$, ${\rm T}_{10}^{\alpha}$ and ${\rm T}_{11}^{\alpha}$, respectively.

\medskip

\item[\mbox{Second,}] 
 we will consider only the case $\alpha=1.$
Under the action of an automorphism of the Lie algebra $(\mathfrak{L},  [\cdot,\cdot]),$
given in Remark \ref{autg2}, we rewrite the multiplication table of $(\mathfrak{L}, \cdot)$ by the following way:

\begin{longtable*}{lcl}
$e_1 \cdot e_3$ & $=$ & $\beta_{33}^3 e_1,$ \\
$e_2 \cdot e_3$ & $=$ & $\beta_{33}^3 e_2,$ \\
$e_3 \cdot e_3$ & $=$ & 
$\frac{\beta^3_{31}+\beta^3_{33}\lambda_{13}}{\lambda_{11}} e_1+ 
\frac{\beta^3_{32}\lambda_{11}-\beta^3_{31}\lambda_{21}-\beta^3_{33}\lambda_{13}\lambda_{21} +\beta^3_{33}\lambda_{11}\lambda_{23}
}{\lambda_{11}^2} e_2 +\beta_{33}^3 e_3.$
\end{longtable*}

Let us consider the following cases.
\begin{enumerate}
    \item 
    $\beta_{33}^3\neq0,$ then by choosing 
   \begin{center} $\lambda_{13}=-\frac{\beta_{31}^3}{\beta_{33}^3},$
    $\lambda_{23}=-\frac{\beta_{32}^3}{\beta_{33}^3},$
    $\lambda_{11}=1, \lambda_{21}=0,$ \end{center}we have

\begin{longtable*}{lcllcllcl}
$e_1 \cdot e_3$ & $=$ & $\beta_{33}^3 e_1,$ &
$e_2 \cdot e_3$ & $=$ & $\beta_{33}^3 e_2,$ &
$e_3 \cdot e_3$ & $=$ & $\beta_{33}^3 e_3.$
\end{longtable*}       
    
       \item 
    $\beta_{33}^3=0,$ 
    then by choosing some suitable  $\lambda_{ij},$ we have two following opportunities
\begin{center}
$e_3 \cdot e_3=  e_2;$ or 
$e_3 \cdot e_3=  e_1.$
\end{center}

\end{enumerate}

From here, we obtain the transposed Poisson algebras ${\rm T}_{09}^{1,\beta}$, ${\rm T}_{10}^{1}$ and ${\rm T}_{11}^{1}$, respectively.

\medskip

\item[\mbox{Third,}] 
we will consider only the case $\alpha=-1.$
Under the action of an automorphism of the first type of the Lie algebra $(\mathfrak{L},  [\cdot,\cdot]),$
given in Remark  \ref{autg2}, we rewrite the multiplication table of $(\mathfrak{L}, \cdot)$ by the following way:

\begin{longtable*}{lcl}
$e_1 \cdot e_3$ & $=$ & $\beta_{33}^3 e_1,$ \\
$e_2 \cdot e_3$ & $=$ & $\beta_{33}^3 e_2,$ \\
$e_3 \cdot e_3$ & $=$ & 
$\frac{\beta^3_{31}+\beta^3_{33}\lambda_{13}}{\lambda_{11}} e_1-$\\ &&$
\frac{\beta^3_{31}\lambda_{11}-2\beta^3_{32}\lambda_{11}+\beta^3_{33}\lambda_{11}\lambda_{13} -\beta^3_{31}\lambda_{22}-\beta^3_{33}\lambda_{13}\lambda_{22}-2\beta^3_{33}\lambda_{11}\lambda_{23}
}{2\lambda_{11} \lambda_{22}} e_2 +\beta_{33}^3 e_3.$
\end{longtable*}

Let us consider the following cases.
\begin{enumerate}
    \item 
    If $\beta_{33}^3\neq0,$ then by choosing 
   \begin{center} $\lambda_{13}=-\frac{\beta_{31}^3}{\beta_{33}^3},$
    $\lambda_{23}=-\frac{\beta_{32}^3}{\beta_{33}^3},$
    $\lambda_{11}=1, \lambda_{21}=0,$ \end{center}we have

\begin{longtable*}{lcllcllcl}
$e_1 \cdot e_3$ & $=$ & $\beta_{33}^3 e_1,$ &
$e_2 \cdot e_3$ & $=$ & $\beta_{33}^3 e_2,$ &
$e_3 \cdot e_3$ & $=$ & $\beta_{33}^3 e_3.$
\end{longtable*}

       \item 
    If $\beta_{33}^3=0$ and $ \beta_{31}^3= 2 \beta_{32}^3,$
    then by choosing      $\lambda_{11}=\beta_{32}^3,$ we have  
    \begin{longtable*}{lcllcllcl}
$e_3 \cdot e_3$ & $=$ & $ 2e_1+e_2.$
\end{longtable*}

       \item 
    If $\beta_{33}^3=0$ and $ \beta_{31}^3\neq 2 \beta_{32}^3,$
    then by choosing some suitable  $\lambda_{ij},$ we have two following opportunities
\begin{center}
$e_3 \cdot e_3=  e_2;$ or 
$e_3 \cdot e_3=  e_1.$
\end{center}

\end{enumerate}

From here, we obtain the transposed Poisson algebras ${\rm T}_{09}^{-1,\beta}$, ${\rm T}_{10}^{-1}$ and ${\rm T}_{11}^{-1}$, respectively.

\end{enumerate}

It is easy to see that 
under the action of an automorphism of the second type, the following algebras

\begin{center}
$e_3 \cdot e_3=  2e_1+e_2$ and  
$e_3 \cdot e_3=  e_2$
\end{center}

are isomorphic.

\medskip

Lastly, since $\mathfrak{g}_2^\alpha\cong \mathfrak{g}_2^\beta$ if and only if $\alpha=\beta^{-1}$, we may find isomorphisms between the transposed Poisson algebras that follow from this study. These isomorphisms are the following:
 \begin{itemize}
     \item ${\rm T}_{09}^{\alpha_1, \beta_1}\cong {\rm T}_{09}^{\alpha_2, \beta_2}$ if and only if $(\alpha_2, \beta_2)=(\alpha_1, \beta_1)$ or  $(\alpha_2, \beta_2)=(\frac{1}{\alpha_1}, \frac{\beta_1}{\alpha_1})$.  For the non-trivial isomorphisms, choose the change of basis:
     $$E_1= \frac{1}{\alpha_1 - 1}e_1, \ E_2= e_1 + \frac{\alpha_1}{\alpha_1-1}e_2, \ E_3= \alpha_1 e_3.$$
     
     \item ${\rm T}_{10*}^{\beta_1}\cong {\rm T}_{10}^{\beta_2}$ if and only if $\beta_2=\frac{1}{\beta_1}$, using the change of basis:
     $$E_1= \frac{1}{\beta_1}e_1, \ E_2= \frac{1-\beta_1}{\beta_1^2}e_1 + \frac{1}{\beta_1^2}e_2, \ E_3= \frac{1}{\beta_1}e_3.$$

     \item ${\rm T}_{11}^{\beta_1}\cong {\rm T}_{11}^{\beta_2}$ if and only if $\beta_2=\beta_1$ or $\beta_2=\frac{1}{\beta_1}$.  For the non-trivial isomorphisms, choose the change of basis:
     $$E_1= \beta_1^2 e_1, \ E_2= (\beta_1 - 1)\beta_1^2 e_1 + \beta_1^3 e_2, \ E_3= \beta_1 e_3 .$$
 \end{itemize}
\end{proof}

\begin{proposition}
Let $(\mathfrak{L}, \cdot, [\cdot,\cdot])$ be a transposed Poisson algebra structure defined on   $\mathfrak{g}_2^2$.
Then $(\mathfrak{L}, \cdot, [\cdot,\cdot])$ is isomorphic to only one of the following algebras:
\begin{enumerate}

 \item ${\rm T}_{09}^{2, \beta}:\left\{ 
\begin{tabular}{l}
$e_1 \cdot e_3= \beta  e_1, \ e_2 \cdot e_3 = \beta  e_2,\  e_3 \cdot e_3 = \beta e_3,$ \\ 
 $[e_1, e_{3}]=e_1+e_2,$ $[e_2, e_3]= 2 e_2.$
\end{tabular}%
\right. $

 \item ${\rm T}_{10}^{2}:\left\{ 
\begin{tabular}{l}
$e_3 \cdot e_3 = e_2 ,$ \\ 
 $[e_1, e_{3}]=e_1+e_2,$ $[e_2, e_3]= 2 e_2.$
\end{tabular}%
\right. $

 \item ${\rm T}_{11}^{2}:\left\{ 
\begin{tabular}{l}
$e_3 \cdot e_3 = e_1 ,$ \\ 
 $[e_1, e_{3}]=e_1+e_2,$ $[e_2, e_3]= 2 e_2.$
\end{tabular}%
\right. $

 \item ${\rm T}_{12}^{\beta}:\left\{ 
\begin{tabular}{l}
$ e_1 \cdot e_1 =  e_2,\ e_1 \cdot e_3 =  \beta e_1, \  e_2 \cdot e_3 =   \beta e_2,  \  e_3 \cdot e_3 =   \beta  e_3,$ \\ 
 $[e_1, e_{3}]=e_1+e_2,$ $[e_2, e_3]= 2 e_2.$
\end{tabular}%
\right. $

 \item ${\rm T}_{13}:\left\{ 
\begin{tabular}{l}
$e_1 \cdot e_1 =  e_2, \  e_3 \cdot e_3 =  e_2 ,$ \\ 
 $[e_1, e_{3}]=e_1+e_2,$ $[e_2, e_3]= 2 e_2.$
\end{tabular}%
\right. $

 \item ${\rm T}_{14}:\left\{ 
\begin{tabular}{l}
$e_1 \cdot e_3 =  e_2, \ e_3 \cdot e_3 =  e_1,$ \\ 
 $[e_1, e_{3}]=e_1+e_2,$ $[e_2, e_3]= 2 e_2.$
\end{tabular}%
\right. $

 \item ${\rm T}_{15}:\left\{ 
\begin{tabular}{l}
$e_1 \cdot e_3 =  e_2 ,$ \\ 
 $[e_1, e_{3}]=e_1+e_2,$ $[e_2, e_3]= 2 e_2.$
\end{tabular}%
\right. $
\end{enumerate}
where the parameter $\beta\in \mathbb{C}$. 
\end{proposition}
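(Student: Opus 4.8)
The plan is to follow the scheme of the previous two propositions, now exploiting the larger algebra of $\frac{1}{2}$-derivations available at $\alpha=2$ (recall $\mathfrak{g}_2^{1/2}\cong\mathfrak{g}_2^2$, so this representative also settles the case $\alpha=\frac{1}{2}$). First I would apply Remark \ref{glavlem}: each $e_k$ carries an associated $\frac{1}{2}$-derivation $\varphi_k$ of $(\mathfrak{L},[\cdot,\cdot])$ with $e_i\cdot e_j=\varphi_i(e_j)=\varphi_j(e_i)$, and then substitute the description of the $\frac{1}{2}$-derivations of $\mathfrak{g}_2^{2}$ from Remark \ref{propderiv3dim2Lieb}(3). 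The new feature compared with $\alpha\neq 0,\frac{1}{2},2$ is the extra coordinate $\beta_{12}^i$ appearing in $\varphi_i(e_1)$. After imposing the symmetry relations $\varphi_i(e_j)=\varphi_j(e_i)$ (which, in particular, force $\beta_{33}^1=\beta_{33}^2=0$), I expect the commutative multiplication to take the form
\[
e_1\cdot e_1=a\,e_2,\quad e_1\cdot e_3=b\,e_1+c\,e_2,\quad e_2\cdot e_3=b\,e_2,\quad e_3\cdot e_3=p\,e_1+q\,e_2+b\,e_3,
\]
with the remaining products zero, where $b=\beta_{33}^3$ and $a,c,p,q$ are the surviving structure constants.

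Next I would impose associativity of $-\cdot-$. In contrast with the generic case it is no longer automatic: evaluating the associator on the triple $(e_1,e_3,e_3)$ yields the single relation $bc=ap$, and one checks that all remaining associators vanish identically. This constraint links the new parameters and must be propagated through the normalization. The normalization is then carried out with $\mathrm{Aut}(\mathfrak{g}_2^{2})$ in the form of Remark \ref{autg2} (the case $\alpha\neq-1$). I would first split on $b=\beta_{33}^3$. If $b\neq 0$, the shift parameters $\lambda_{13},\lambda_{23}$ clear the $e_1$- and $e_2$-components of $e_3\cdot e_3$; the subcase $a=0$ (whence $c=0$ by $bc=ap$) delivers ${\rm T}_{09}^{2,\beta}$, while $a\neq 0$ (after rescaling $a=1$) delivers ${\rm T}_{12}^{\beta}$, with $\beta=b$. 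If $b=0$, then $ap=0$, and a finer division according to the vanishing of $a$, $c$, $p$, $q$ produces the remaining forms: $a\neq 0$ gives ${\rm T}_{12}^{0}$ or ${\rm T}_{13}$, whereas $a=0$ gives ${\rm T}_{10}^{2}$, ${\rm T}_{11}^{2}$, ${\rm T}_{14}$, ${\rm T}_{15}$ (with ${\rm T}_{09}^{2,0}$ the trivial multiplication).

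The step I expect to be the crux is the normalization when $a\neq 0$. The point is that a nonzero $e_1\cdot e_1=a\,e_2$ couples to the automorphism through the shift $\lambda_{13}$: the term $\phi(e_1)\cdot(\lambda_{13}e_1)$ produces a multiple of $a\,e_2$, so when $a\neq 0$ one can absorb the $e_2$-coefficient $c$ of $e_1\cdot e_3$ and, through $bc=ap$, simultaneously control $p$. This is precisely what separates ${\rm T}_{12}$ and ${\rm T}_{13}$ from the spurious intermediate forms, and the coupling does not occur in the previous propositions. Finally I would verify that the seven normal forms are pairwise non-isomorphic and that the members of each parametrized family are distinguished by $\beta$. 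Here the cleanest invariant is the isomorphism type of the commutative associative algebra $(\mathfrak{L},\cdot)$ together with its position relative to the bracket; for instance, the nilpotent part of $(\mathfrak{L},\cdot)$ is square-zero for ${\rm T}_{09}^{2,\beta}$ but not for ${\rm T}_{12}^{\beta}$, while the value of $\beta$ is pinned down by the eigenvalue of right multiplication by the normalized $e_3$-direction, which the automorphisms of Remark \ref{autg2} fix.
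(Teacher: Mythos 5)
Your setup and the bulk of your case analysis track the paper's proof closely: the same symmetry relations force $\beta_{33}^1=\beta_{33}^2=0$ and give exactly the multiplication shape you wrote; the single associativity constraint is indeed $bc=ap$ (the paper records it as $\beta_{31}^3\beta_{12}^1=\beta_{32}^1\beta_{33}^3$, and all other associators do vanish); and your observation that, for $b\neq 0$, the shift $\lambda_{13}$ kills $p$ and, via $bc=ap$, automatically kills $c$, is precisely the mechanism in the paper's cases with $\beta_{33}^3\neq0$. The branch $b=0$, $a\neq0$ (hence $p=0$) splitting into ${\rm T}_{13}$ or ${\rm T}_{12}^0$ according to $aq\neq c^2$ or $aq=c^2$ also matches the paper.

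However, there is a genuine gap in your branch $b=0$, $a=0$, $c=0$, where $e_3\cdot e_3=p\,e_1+q\,e_2$. You claim this normalizes to ${\rm T}_{10}^2$ or ${\rm T}_{11}^2$ according to the vanishing of $p$ and $q$, but the orbit structure is finer than the vanishing pattern. Since $\alpha=2\neq-1$, only the first type of automorphism in Remark \ref{autg2} is available, and under it the new $e_2$-coefficient of $e_3\cdot e_3$ is $\bigl(\lambda_{11}(p+q)-p\lambda_{22}\bigr)/(\lambda_{11}\lambda_{22})$ with $\lambda_{22}=\lambda_{11}+\lambda_{21}\neq0$. For $p\neq0$ one can clear it only when $p+q\neq0$; on the line $q=-p$ the required choice forces $\lambda_{22}=0$, and the orbit of $e_3\cdot e_3=-e_1+e_2$ is distinct from those of $e_3\cdot e_3=e_1$ and $e_3\cdot e_3=e_2$. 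So there are \emph{eight} orbits on $\mathfrak{g}_2^2$, not seven; the paper's proof finds this extra case explicitly (case $\beta_{31}^3=-\beta_{32}^3$), labels it ${\rm T}_{10*}^2$, and disposes of it by the identification ${\rm T}_{10*}^{2}\cong{\rm T}_{10}^{1/2}$ under the Lie isomorphism $\mathfrak{g}_2^2\cong\mathfrak{g}_2^{1/2}$ — which is how the proposition's seven-item list is reconciled with Theorem A, where ${\rm T}_{10}^{1/2}$ appears as a member of the family ${\rm T}_{10}^{\alpha}$ and is \emph{not} isomorphic to ${\rm T}_{10}^{2}$. Your proof as sketched would silently absorb this algebra into ${\rm T}_{11}^2$ or ${\rm T}_{10}^2$, which is false; you need to detect the exceptional locus $p+q=0$ and account for the resulting algebra, exactly as you (correctly) relied on the analogous starred family ${\rm T}_{10*}^{\alpha}$ in the generic-$\alpha$ proposition, of which $e_3\cdot e_3=(1-\alpha)e_1+e_2$ at $\alpha=2$ is the specialization. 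The rest of your argument, including the invariance of $\beta=\beta_{33}^3$ under the automorphisms fixing the $e_3$-component, is sound.
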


\begin{proof}

 From Remark \ref{propderiv3dim2Lieb}, 
\begin{center}
$\varphi_i(e_1) = \beta_{33}^i e_1 +  \beta_{12}^i e_2$, 
$\varphi_i(e_2) = \beta_{33}^i e_2$, 
$\varphi_i(e_3) = \beta_{31}^i e_1 +\beta_{32}^i e_2 + \beta_{33}^i e_3$
\end{center}
and
\begin{longtable*}{rcccccl}
$\beta_{33}^1 e_2$ & $=$ & $\varphi_1 (e_2)$ & $=$ & $\varphi_2(e_1)$ & $=$ & $\beta_{33}^2 e_1 + \beta_{12}^2 e_2$,  \\
$\beta_{31}^1 e_1 + \beta_{32}^1 e_2 +\beta_{33}^1 e_3$ & $=$ & $\varphi_1(e_3)$ & = & $\varphi_3(e_1)$ & $=$ & $\beta_{33}^3 e_1 + \beta_{12}^3 e_2,$ \\
$\beta_{31}^2 e_1 + \beta_{32}^2 e_2 +\beta_{33}^2 e_3$ & $=$ & $\varphi_2(e_3)$ & = & $\varphi_3(e_2)$ & $=$ & $\beta_{33}^3 e_2.$ \\
\end{longtable*}
Hence, the commutative multiplication $-\cdot-$ is defined by
\begin{eqnarray*}
e_1 \cdot e_1 & = &  \beta_{12}^1 e_2, \\
e_1 \cdot e_3 & = & \beta_{33}^3 e_1 +  \beta_{32}^1 e_2, \\
e_2 \cdot e_3 & = & \beta_{33}^3 e_2, \\
e_3 \cdot e_3 & = & \beta_{31}^3 e_1 + \beta_{32}^3 e_2 +\beta_{33}^3 e_3.
\end{eqnarray*}
This multiplication is associative if and only if $\beta_{31}^3\beta_{12}^1=\beta_{32}^1\beta_{33}^3$. 
 Under the action of an automorphism of the Lie algebra $(\mathfrak{L},  [\cdot,\cdot]),$
given in Remark \ref{autg2}, we rewrite the multiplication table of $(\mathfrak{L}, \cdot)$ by the following way:

\begin{longtable*}{lcl}
$e_1 \cdot e_1$ & $=$ & $\frac{\beta_{12}^1 \lambda_{11}^2}{\lambda_{22}} e_2,$ \\
$e_1 \cdot e_3$ & $=$ & $\beta_{33}^3 e_1+\frac{\lambda_{11}(\beta^1_{32}+\beta^1_{12}\lambda_{13})}{\lambda_{22}}e_2,$ \\
$e_2 \cdot e_3$ & $=$ & 
$\beta^3_{33}e_2,$\\

$e_3 \cdot e_3$ & $=$ & 
$\frac{\beta^3_{31}+\beta^3_{33}\lambda_{13}}{\lambda_{11}}e_1+
\frac{\lambda_{11}(\beta^3_{31}+\beta^3_{32}+2\beta^1_{32}\lambda_{13}+\beta^3_{33}\lambda_{13}+\beta^1_{12}\lambda^2_{13}+\beta^3_{33}\lambda_{23})-
(\beta^3_{31}+\beta^3_{33}\lambda_{13})\lambda_{22}      }{\lambda_{11}\lambda_{22}}e_2+\beta_{33}^3e_3.$

\end{longtable*}

Let us consider the following cases.
\begin{enumerate}
    \item $\beta^3_{33}\neq0,$ $\beta_{32}^1=\frac{\beta^3_{31}\beta^1_{12}}{\beta^3_{33}}$ 
    and $\beta^1_{12}\neq0,$ then by choosing
    \begin{center}
         $\lambda_{11}=1,$
         $\lambda_{22}=\beta^1_{12},$
         $\lambda_{13}=-\frac{\beta^3_{31}}{\beta^3_{32}},$
         $\lambda_{23}=\frac{\beta^1_{12}(\beta^3_{31})^2-\beta^3_{32}(\beta^3_{33})^2}{(\beta_{33}^3)^3},$     
    \end{center}
    we have
    \begin{longtable*}{lcllcllcllcl}
$e_1 \cdot e_1$ & $=$ & $ e_2,$ &
$e_1 \cdot e_3$ & $=$ & $ \beta^3_{33} e_1,$&
$e_2 \cdot e_3$ & $=$ & $  \beta^3_{33} e_2,$ &
$e_3 \cdot e_3$ & $=$ & $  \beta^3_{33} e_3.$ 
\end{longtable*}    

   \item $\beta^3_{33}\neq0,$ $\beta_{32}^1=\frac{\beta^3_{31}\beta^1_{12}}{\beta^3_{33}}$ 
    and $\beta^1_{12}=0,$ then by choosing
    \begin{center}
         $\lambda_{11}=1,$
         $\lambda_{22}=1,$
         $\lambda_{13}=-\frac{\beta^3_{31}}{\beta^3_{33}},$
         $\lambda_{23}=-\frac{\beta^3_{32}}{\beta^3_{32}},$     
    \end{center}
    we have
    \begin{longtable*}{lcllcllcllcl}

$e_1 \cdot e_3$ & $=$ & $ \beta^3_{33} e_1,$&
$e_2 \cdot e_3$ & $=$ & $  \beta^3_{33} e_2,$ &
$e_3 \cdot e_3$ & $=$ & $  \beta^3_{33} e_3.$ 
\end{longtable*}

  \item $\beta^3_{33}=0,$ $\beta_{31}^3=0,$ $\beta^1_{12}\neq0$
  and $\beta^1_{12}\beta^3_{32}\neq (\beta^1_{32})^2,$
     then by choosing
    \begin{center}
         $\lambda_{11}=\frac{\sqrt{\beta^1_{12}\beta^3_{32}-(\beta^1_{32})^2}}{\beta^1_{12}},$  
         $\lambda_{22}=-\frac{(\beta^1_{32})^2}{\beta^1_{12}}+\beta^3_{32},$
         $\lambda_{13}=-\frac{\beta^1_{32}}{2\beta^1_{12}},$
         $\lambda_{23}=0,$     
    \end{center}
    we have
    \begin{longtable*}{lcllcllcllcl}
$e_1 \cdot e_1$ & $=$ & $ e_2,$ &
$e_3 \cdot e_3$ & $=$ & $ e_2.$  

\end{longtable*}

  \item $\beta^3_{33}=0,$ $\beta_{31}^3=0,$ $\beta^1_{12}\neq0$
  and $\beta^1_{12}\beta^3_{32}= (\beta^1_{32})^2,$
     then by choosing
    \begin{center}
         $\lambda_{11}=1,$  
         $\lambda_{22}=\beta^1_{12},$
         $\lambda_{13}=-\frac{\beta^1_{32}}{2\beta^1_{12}},$
         $\lambda_{23}=0,$     
    \end{center}
    we have
    \begin{longtable*}{lcllcllcllcl}
$e_1 \cdot e_1$ & $=$ & $ e_2.$ 

\end{longtable*} 

    \item $\beta^3_{33}=0,$ $\beta_{12}^1=0,$ $\beta^1_{32}\neq0$ and $\beta^3_{31}\neq0,$  then by choosing
    \begin{center}
         $\lambda_{11}=\beta^3_{31},$  
         $\lambda_{22}=\beta^1_{32}\beta^3_{31},$
         $\lambda_{13}=\frac{\beta^1_{32}\beta^3_{31}-\beta^3_{31}-\beta^3_{32} }{2\beta^1_{32}},$
         $\lambda_{23}=0,$     
    \end{center}
    we have
    \begin{longtable*}{lcllcllcllcl}
$e_1 \cdot e_3$ & $=$ & $ e_2,$ & 
$e_3 \cdot e_3$ & $=$ & $ e_1.$ & 

\end{longtable*}  
    
  \item $\beta^3_{33}=0,$ $\beta_{12}^1=0,$ $\beta^1_{32}\neq0$ and $\beta^3_{31}=0,$  then by choosing
    \begin{center}
         $\lambda_{11}=1,$  
         $\lambda_{22}=\beta^1_{32},$
         $\lambda_{13}=\frac{\beta^3_{32} }{2\beta^1_{32}},$
         $\lambda_{23}=0,$     
    \end{center}
    we have
    \begin{longtable*}{lcllcllcllcl}
$e_1 \cdot e_3$ & $=$ & $ e_2.$ 
\end{longtable*}

    \item $\beta^3_{33}=0,$ $\beta_{12}^1=0,$ $\beta^1_{32}=0$ and $\beta^3_{31}= -\beta^3_{32},$
   then by choosing  then by choosing
    \begin{center}
         $\lambda_{11}=\beta^3_{32},$  
         $\lambda_{22}=1,$
         $\lambda_{13}=\frac{\beta^3_{32} }{2\beta^1_{32}},$
         $\lambda_{23}=0,$     
    \end{center}
    we have
  \begin{center}
$e_3 \cdot e_3=  -e_1+e_2.$
\end{center}

    \item $\beta^3_{33}=0,$ $\beta_{12}^1=0,$ $\beta^1_{32}=0$ and $\beta^3_{31}\neq -\beta^3_{32},$
   then by choosing some suitable  $\lambda_{ij},$ we have two following opportunities
  \begin{center}
$e_3 \cdot e_3=  e_1;$ or 
$e_3 \cdot e_3=  e_2.$
\end{center}
\end{enumerate}

These cases produce the algebras ${\rm T}_{12}^{\beta\neq0}$, ${\rm T}_{09}^{2,\beta}$, ${\rm T}_{13}$, ${\rm T}_{12}^{0}$, ${\rm T}_{14}$, ${\rm T}_{15}$, ${\rm T}_{10*}^{2}$, ${\rm T}_{10}^{2}$, respectively. Again, recall that ${\rm T}_{10*}^{2}\cong {\rm T}_{10}^{\frac{1}{2}}$.

\end{proof}

\begin{proposition}
Let $(\mathfrak{L}, \cdot, [\cdot,\cdot])$ be a transposed Poisson algebra structure defined on   $\mathfrak{g}_2^0$.
Then $(\mathfrak{L}, \cdot, [\cdot,\cdot])$ is isomorphic to only one of the following algebras:
\begin{enumerate}

 \item ${\rm T}_{09}^{0, \beta}:\left\{ 
\begin{tabular}{l}
$e_1 \cdot e_3= \beta  e_1, \ e_2 \cdot e_3 = \beta  e_2,\  e_3 \cdot e_3 = \beta e_3,$ \\ 
 $[e_1, e_{3}]=e_1+e_2$.
\end{tabular}%
\right. $

 \item ${\rm T}_{10}^0:\left\{ 
\begin{tabular}{l}
$e_3 \cdot e_3 = e_2,$ \\ 
 $[e_1, e_{3}]=e_1+e_2$.
\end{tabular}%
\right. $

 \item ${\rm T}_{11}^0:\left\{ 
\begin{tabular}{l}
$e_3 \cdot e_3 = e_1,$ \\ 
 $[e_1, e_{3}]=e_1+e_2$.
\end{tabular}%
\right. $

 \item ${\rm T}_{16}:\left\{ 
\begin{tabular}{l}
$e_3 \cdot e_3=   e_1+e_2,$ \\ 
 $[e_1, e_{3}]=e_1+e_2$.
\end{tabular}%
\right. $

 \item ${\rm T}_{17}^{\beta}:\left\{ 
\begin{tabular}{l}
$e_1 \cdot e_1=e_2,$ \
$e_1 \cdot e_2=- e_2,$ \
$e_1 \cdot e_3=\beta  e_1,$ \
$e_2 \cdot e_2=   e_2,$ \
$e_2 \cdot e_3= \beta  e_2,$ \
$e_3 \cdot e_3= \beta  e_3,$ \\ 
 $[e_1, e_{3}]=e_1+e_2$.
\end{tabular}%
\right. $

 \item ${\rm T}_{18}:\left\{ 
\begin{tabular}{l}
$e_1 \cdot e_1=e_2,$ \
$e_1 \cdot e_2=- e_2,$ \
$e_2 \cdot e_2=    e_2,$ \
$e_3 \cdot e_3= e_1+e_2,$ \\ 
 $[e_1, e_{3}]=e_1+e_2$.
\end{tabular}%
\right. $

 \item ${\rm T}_{19}^{\gamma}:\left\{ 
\begin{tabular}{l}
$e_1 \cdot e_3= \gamma e_1+ \gamma e_2,$ \
$e_3 \cdot e_3= \gamma  e_3,$ \\ 
 $[e_1, e_{3}]=e_1+e_2$.
\end{tabular}%
\right. $
 \end{enumerate}
 where the parameters $\beta\in\mathbb{C}$ and $\gamma\in\mathbb{C}^*$.
\end{proposition}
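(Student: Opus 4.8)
The plan is to repeat the scheme of the two preceding propositions, the decisive difference being that for $\alpha=0$ the Lie algebra $\mathfrak{g}_2^0$ carries a strictly larger space of $\frac12$-derivations, namely case (4) of Remark \ref{propderiv3dim2Lieb}. First I would invoke Remark \ref{glavlem}: each $e_k$ gives a $\frac12$-derivation $\varphi_k$ of $\mathfrak{g}_2^0$ with $e_i\cdot e_j=\varphi_i(e_j)=\varphi_j(e_i)$, and by Remark \ref{propderiv3dim2Lieb}(4) each has the shape $\varphi_i(e_1)=\beta_{33}^i e_1-(\beta_{22}^i-\beta_{33}^i)e_2$, $\varphi_i(e_2)=\beta_{22}^i e_2$, $\varphi_i(e_3)=\beta_{31}^i e_1+\beta_{32}^i e_2+\beta_{33}^i e_3$. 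Imposing the symmetry relations $\varphi_i(e_j)=\varphi_j(e_i)$ collapses the parameters and leaves a commutative multiplication depending only on $\beta_{22}^2,\ \beta_{32}^1,\ \beta_{33}^3,\ \beta_{31}^3,\ \beta_{32}^3$, with $e_1\cdot e_3=\beta_{33}^3 e_1+\beta_{32}^1 e_2$ and $e_2\cdot e_3=(\beta_{33}^3-\beta_{32}^1)e_2$. The crucial new phenomenon is that the $e_2$-component sitting inside $\varphi_i(e_1)$ forces the products $e_1\cdot e_1=\beta_{22}^2 e_2$, $e_1\cdot e_2=-\beta_{22}^2 e_2$, $e_2\cdot e_2=\beta_{22}^2 e_2$ to be possibly nonzero; these are precisely the source of the extra families ${\rm T}_{17}$, ${\rm T}_{18}$, ${\rm T}_{19}$ that are absent for generic $\alpha$.

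Next I would impose associativity of $-\cdot-$. Computing all associators, each one vanishes identically except for a single scalar identity relating the coefficient of $e_1\cdot e_1$, the off-diagonal coefficient of $e_1\cdot e_3$, the diagonal eigenvalue $\beta_{33}^3$ and the $\langle e_1,e_2\rangle$-part of $e_3\cdot e_3$ (schematically $\beta_{22}^2(\beta_{31}^3-\beta_{32}^3)=\beta_{32}^1(\beta_{33}^3-\beta_{32}^1)$). This one relation is what keeps the number of free parameters low enough to produce finitely many isomorphism classes.

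Then comes the core of the argument: separating orbits under the first type of $\mathrm{Aut}(\mathfrak{g}_2^0)$ from Remark \ref{autg2} with $\alpha=0$, that is $\phi(e_1)=\lambda_{11}e_1+\lambda_{21}e_2$, $\phi(e_2)=(\lambda_{11}-\lambda_{21})e_2$, $\phi(e_3)=\lambda_{13}e_1+\lambda_{23}e_2+e_3$. The primary dichotomy is whether $e_1\cdot e_1$ vanishes. If $\beta_{22}^2\neq0$, the factor $\lambda_{11}-\lambda_{21}$ rescales it to $e_1\cdot e_1=e_2$; the shift $e_3\mapsto e_3-\beta_{32}^1 e_1$ then clears the off-diagonal part of $e_1\cdot e_3$, and—using the associativity identity—makes the two coefficients of the $\langle e_1,e_2\rangle$-part of $e_3\cdot e_3$ equal, so when $\beta_{33}^3\neq0$ they are removed simultaneously by the remaining shifts (giving ${\rm T}_{17}^{\beta}$), while for $\beta_{33}^3=0$ a residual rescaling sends $e_3\cdot e_3$ either to $e_1+e_2$ (giving ${\rm T}_{18}$) or, if it already vanishes, to $0$ (giving ${\rm T}_{17}^{0}$). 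If $\beta_{22}^2=0$, the associativity identity degenerates to $\beta_{32}^1\in\{0,\beta_{33}^3\}$: the value $0$ restores the familiar products $e_1\cdot e_3=\beta_{33}^3 e_1$, $e_2\cdot e_3=\beta_{33}^3 e_2$ and leads to ${\rm T}_{09}^{0,\beta}$, ${\rm T}_{10}^0$, ${\rm T}_{11}^0$, ${\rm T}_{16}$, whereas $\beta_{32}^1=\beta_{33}^3=\gamma\neq0$ produces the mixed product $e_1\cdot e_3=\gamma(e_1+e_2)$ which no shift can untangle and, after reducing $e_3\cdot e_3$ to $\gamma e_3$, yields ${\rm T}_{19}^{\gamma}$.

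The step I expect to be the genuine obstacle is the final normalization of $e_3\cdot e_3$ inside the branch $\beta_{22}^2=\beta_{32}^1=\beta_{33}^3=0$. There all products $e_i\cdot e_j$ with $\{i,j\}\neq\{3,3\}$ vanish, so the shifts $\lambda_{13},\lambda_{23}$ act trivially on $e_3\cdot e_3$, and the coefficient vector $(\beta_{31}^3,\beta_{32}^3)$ of $e_3\cdot e_3\in\langle e_1,e_2\rangle$ is normalized solely by the induced action of $\phi$ on the plane $\langle e_1,e_2\rangle$. Because $\langle e_2\rangle$ is an eigenline of every automorphism, this action is lower triangular rather than the full $\mathrm{GL}_2$, and it has exactly three orbits on nonzero vectors, with representatives $e_1$, $e_2$ and $e_1+e_2$ (the diagonal $\beta_{31}^3=\beta_{32}^3$ being preserved); this is precisely why the branch splits into the three distinct algebras ${\rm T}_{11}^0$, ${\rm T}_{10}^0$, ${\rm T}_{16}$ rather than collapsing to one. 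Finally, since the eigenvalue $\beta_{33}^3$ (read off from the products $e_i\cdot e_3$) and the vanishing of $e_1\cdot e_1$ are invariant under every automorphism, they pin down the parameters $\beta$ and $\gamma$ and separate the families, so the displayed list is both exhaustive and irredundant.
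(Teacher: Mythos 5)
Your proposal is correct and follows essentially the same route as the paper's proof: you compute the $\frac{1}{2}$-derivations from case (4) of Remark \ref{propderiv3dim2Lieb}, impose the symmetry $\varphi_i(e_j)=\varphi_j(e_i)$ to get the five-parameter commutative product (your parametrization $\beta_{32}^1=\beta_{33}^3-\beta_{22}^3$ turns the paper's associativity condition $(\beta_{22}^3)^2-\beta_{33}^3\beta_{22}^3+(\beta_{31}^3-\beta_{32}^3)\beta_{22}^2=0$ into exactly your identity $\beta_{22}^2(\beta_{31}^3-\beta_{32}^3)=\beta_{32}^1(\beta_{33}^3-\beta_{32}^1)$), and then separate orbits under the automorphisms of Remark \ref{autg2}, recovering the paper's seven cases with the same dichotomies ($\beta_{22}^2\neq 0$ versus $\beta_{22}^2=0$, then $\beta_{33}^3$ and the residual $\langle e_1,e_2\rangle$-vector). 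One small imprecision: in the final branch the induced action on $\langle e_1,e_2\rangle$ is not the full lower-triangular group but the subgroup with entries $l_{21}=l_{11}-l_{22}$ (the full lower-triangular group would merge $e_1+e_2$ into the orbit of $e_1$); your parenthetical observation that the diagonal $\beta_{31}^3=\beta_{32}^3$ is preserved is precisely the invariant of this subgroup and correctly yields the three orbits ${\rm T}_{11}^0$, ${\rm T}_{10}^0$, ${\rm T}_{16}$, so the conclusion stands.
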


\begin{proof}
From Remark \ref{propderiv3dim2Lieb}, 
\begin{center}
    $\varphi_i(e_1) = \beta_{33}^i e_1 + (\beta_{33}^i-\beta_{22}^i)e_2$, $\varphi_i(e_2) = \beta_{22}^i e_2$, $\varphi_i(e_3) = \beta_{31}^i e_1 +\beta_{32}^i e_2 + \beta_{33}^i e_3$
\end{center} and
\begin{longtable*}{rcccccl}
$\beta_{22}^1 e_2$ & $=$ & $\varphi_1 (e_2)$ & $=$ & $\varphi_2(e_1)$ & $= $& $\beta_{33}^2 e_1 + (\beta_{33}^2-\beta_{22}^2)e_2$,  \\
$\beta_{31}^1 e_1 + \beta_{32}^1 e_2 +\beta_{33}^1 e_3$ & $= $& $\varphi_1(e_3)$ & $=$ & $\varphi_3(e_1)$ & $=$ & $\beta_{33}^3 e_1 + (\beta_{33}^3-\beta_{22}^3)e_2,$ \\
$\beta_{31}^2 e_1+\beta_{32}^2 e_2 + \beta_{33}^2 e_3$ & $=$ & $\varphi_2(e_3)$ & $=$ & $\varphi_3(e_2)$ &$ =$ & $\beta_{22}^3 e_2.$ \\
\end{longtable*}
Hence, the commutative multiplication $-\cdot-$ is defined by
\begin{eqnarray*}
e_1 \cdot e_1 & = & \beta_{22}^2 e_2, \\
e_1 \cdot e_2 & = & -\beta_{22}^2 e_2,  \\
e_1 \cdot e_3 & = & \beta_{33}^3 e_1 - (\beta_{22}^3 - \beta_{33}^3)e_2, \\
e_2 \cdot e_2 & = & \beta_{22}^2 e_2, \\
e_2 \cdot e_3 & = & \beta_{22}^3 e_2, \\
e_3 \cdot e_3 & = & \beta_{31}^3 e_1 + \beta_{32}^3 e_2 +\beta_{33}^3 e_3.
\end{eqnarray*} This multiplication is associative if and only if $(\beta_{22}^3)^2-\beta_{33}^3\beta_{22}^3+(\beta_{31}^3-\beta_{32}^3)\beta_{22}^2=0$. 

Under the action of an automorphism of the Lie algebra $(\mathfrak{L},  [\cdot,\cdot]),$
given in Remark \ref{autg2}, we rewrite the multiplication table of $(\mathfrak{L}, \cdot)$ by the following way:

\begin{longtable*}{lcl}
$e_1 \cdot e_1$ & $=$ & $\beta_{22}^2 \lambda_{22}  e_2,$ \\
$e_1 \cdot e_2$ & $=$ & $-\beta_{22}^2 \lambda_{22} e_2,$ \\
$e_1 \cdot e_3$ & $=$ & 
$\beta_{33}^3 e_1+(\beta^3_{33}-\beta^3_{22}+\beta^2_{22}\lambda_{13}-\beta^2_{22}\lambda_{23})e_2,$ \\
$e_2 \cdot e_2$ & $=$ & $\beta^2_{22}\lambda_{22}e_2,$\\

$e_2 \cdot e_3$ & $=$ & $(\beta^3_{22}-\beta^2_{22}\lambda_{13}+\beta^2_{22}\lambda_{23})e_2,$\\

$e_3 \cdot e_3$ & $=$ & 
$\frac{\beta^3_{31}+\beta^3_{33}\lambda_{13}}{\lambda_{11}}e_1+$\\
&&$\frac{(\beta^3_{31}+\beta^3_{33}\lambda_{13})\lambda_{22}+\lambda_{11}(\beta^3_{32}-\beta^3_{31}+\beta^2_{22}\lambda_{13}^2+(2\beta^3_{22}-\beta^3_{33})\lambda_{23}+\beta^2_{22}\lambda_{23}^2+\lambda_{13}(\beta^3_{33}-2\beta^3_{22}-2\beta^2_{22}\lambda_{23}))}
{\lambda_{11}\lambda_{22} }e_2+\beta_{33}^3e_3.$

\end{longtable*}

Let us consider the following cases.
\begin{enumerate}
    \item 
    $\beta_{22}^2\neq0,$ $\beta_{31}^3=\frac{\beta_{33}^3\beta_{22}^3+\beta_{32}^3\beta_{22}^2-(\beta_{22}^3)^2}{\beta_{22}^2}$ and $\beta^3_{33}\neq0,$
    then by choosing 
   \begin{center} 
    $\lambda_{11}=1,$
    $\lambda_{22}=\frac{1}{\beta^2_{22}},$
    $\lambda_{13}=\frac{(\beta_{32}^3)^2 -\beta^2_{22}\beta^3_{32}-\beta^3_{22}\beta^3_{33}}{\beta^2_{22}\beta_{33}^3},$
    $\lambda_{23}=\frac{(\beta_{32}^3)^2 -\beta^2_{22}\beta^3_{32}-2\beta^3_{22}\beta^3_{33}+(\beta^3_{33})^2}{\beta^2_{22}\beta_{33}^3},$
     \end{center}we have

\begin{longtable*}{lcllcllcl}
$e_1 \cdot e_1$ & $=$ & $e_2,$ &
$e_1 \cdot e_2$ & $=$ & $- e_2,$ &
$e_1 \cdot e_3$ & $=$ & $\beta_{33}^3 e_1,$ \\
$e_2 \cdot e_2$ & $=$ & $    e_2,$ &
$e_2 \cdot e_3$ & $=$ & $\beta_{33}^3 e_2,$ &
$e_3 \cdot e_3$ & $=$ & $\beta_{33}^3 e_3.$
\end{longtable*}

    \item 
    $\beta_{22}^2\neq0,$ $\beta_{31}^3=\frac{\beta_{33}^3\beta_{22}^3+\beta_{32}^3\beta_{22}^2-(\beta_{22}^3)^2}{\beta_{22}^2},$ $\beta^3_{33}=0$ and $\beta^2_{22}\beta^3_{32}\neq (\beta^3_{22})^2,$
    then by choosing 
   \begin{center} 
    $\lambda_{11}=-\frac{(\beta^3_{22})^2}{\beta^2_{22}}+\beta^3_{32},$
    $\lambda_{22}=\frac{1}{\beta^2_{22}},$
    $\lambda_{13}=0,$
    $\lambda_{23}=\frac{\beta_{22}^3}{\beta^2_{22}},$
     \end{center}we have 

\begin{longtable*}{lcllcllcllcl}
$e_1 \cdot e_1$ & $=$ & $e_2,$ &
$e_1 \cdot e_2$ & $=$ & $- e_2,$ &
$e_2 \cdot e_2$ & $=$ & $    e_2,$ &
$e_3 \cdot e_3$ & $=$ & $e_1+e_2.$
\end{longtable*}

  \item 
    $\beta_{22}^2\neq0,$ $\beta_{31}^3=\frac{\beta_{33}^3\beta_{22}^3+\beta_{32}^3\beta_{22}^2-(\beta_{22}^3)^2}{\beta_{22}^2},$ $\beta^3_{33}=0$ and $\beta^2_{22}\beta^3_{32}= (\beta^3_{22})^2,$
    then by choosing 
   \begin{center} 
    $\lambda_{11}=1,$
    $\lambda_{22}=\frac{1}{\beta^2_{22}},$
    $\lambda_{13}=0,$
    $\lambda_{23}=\frac{\beta_{22}^3}{\beta^2_{22}},$
     \end{center}we have 

\begin{longtable*}{lcllcllcllcl}
$e_1 \cdot e_1$ & $=$ & $e_2,$ &
$e_1 \cdot e_2$ & $=$ & $- e_2,$ &
$e_2 \cdot e_2$ & $=$ & $    e_2.$ 
\end{longtable*}

 \item 
    $\beta_{22}^2=0,$ $\beta^3_{22}=0$ and $\beta^3_{33}\neq0,$  
    then by choosing 
   \begin{center} 
    $\lambda_{11}=1,$
    $\lambda_{22}=1,$
    $\lambda_{13}=-\frac{\beta^3_{31}}{\beta^3_{32}},$
    $\lambda_{23}=\frac{\beta^3_{32}-2\beta_{31}^3}{\beta^3_{33}},$
     \end{center}we have 

\begin{longtable*}{lcllcllcllcl}
$e_1 \cdot e_3$ & $=$ & $\beta^3_{33} e_1+ \beta^3_{33} e_2,$ &
$e_3 \cdot e_3$ & $=$ & $  \beta^3_{33}  e_3.$ 
\end{longtable*}

 \item 
    $\beta_{22}^2=0,$ $\beta^3_{22}=0,$  $\beta^3_{33}0$ and $\beta^3_{31}=\beta^3_{32},$   
    then by choosing 
   \begin{center} 
    $\lambda_{11}=\beta^3_{32},$
    $\lambda_{22}=1,$
    $\lambda_{13}=0,$
    $\lambda_{23}=0,$
     \end{center}we have 

\begin{longtable*}{lcllcllcllcl}
$e_3 \cdot e_3$ & $=$ & $  e_1 + e_2.$ 
\end{longtable*}

    \item    $\beta_{22}^2=0,$ $\beta^3_{22}=0,$  $\beta^3_{33}0$ and $\beta^3_{31}\neq\beta^3_{32},$   
  then by choosing some suitable  $\lambda_{ij},$ we have two following opportunities
  \begin{center}
$e_3 \cdot e_3=  e_1;$ or 
$e_3 \cdot e_3=  e_2.$
\end{center}

 \item 
    $\beta_{22}^2=0$ and $\beta^3_{22}=\beta^3_{33}\neq 0,$ 
    then by choosing 
   \begin{center} 
    $\lambda_{11}=1,$
    $\lambda_{22}=1,$
    $\lambda_{13}=-\frac{\beta^3_{31}}{\beta^3_{32}},$
    $\lambda_{23}=-\frac{\beta^3_{32}}{\beta^3_{33}},$
     \end{center}we have 

\begin{longtable*}{lcllcllcllcl}
$e_1 \cdot e_3$ & $=$ & $\beta_{33}^3 e_1,$ &
$e_2 \cdot e_3$ & $=$ & $\beta_{33}^3 e_2,$ &
$e_3 \cdot e_3$ & $=$ & $\beta_{33}^3 e_3.$
\end{longtable*}
\end{enumerate}

After constructing the corresponding transposed Poisson algebras from these cases, we obtain the algebras ${\rm T}_{17}^{\beta\neq0}$, ${\rm T}_{18}$, ${\rm T}_{17}^{0}$, ${\rm T}_{19}^{\gamma\neq0}$, ${\rm T}_{16}$, ${\rm T}_{11}^{0}$, ${\rm T}_{10}^{0}$, ${\rm T}_{09}^{0,\beta}$, respectively. 

\end{proof}

\subsection{Classification theorem}
 The results of the previous subsection together with the classification of the commutative associative algebras of dimension three (we recall the classification that was used in \cite{gkp21}), give us the following classification theorem.

\begin{theoremA}\label{theoremain}
Let $(\mathfrak{L}, \cdot,  [\cdot, \cdot ])$ be a nonzero complex 
$3$-dimensional transposed Poisson algebra,
then  $(\mathfrak{L}, \cdot,  [\cdot, \cdot ])$ is isomorphic to one and only one transposed Poisson algebra
listed below:

\begin{itemize}
 
 \item ${\rm T}_{01}:$ $[e_1, e_{2}]=e_3,$  $[e_{1}, e_3]=-e_2,$  $[e_2, e_{3}]=e_1.$
 
     \item ${\rm T}_{02}:\left\{ 
\begin{tabular}{l}
$e_2 \cdot  e_2 = e_3,$\\ 
$[ e_1 , e_2 ] = e_3.$ 
\end{tabular}%
\right. $

    \item ${\rm T}_{03}^{\beta}:\left\{ 
\begin{tabular}{l}
$e_1 \cdot  e_2 = \beta e_3,$ \\ 
$[ e_1 , e_2 ] = e_3.$
\end{tabular}%
\right. $

    \item ${\rm T}_{04}^{\beta}:\left\{ 
\begin{tabular}{l}
$e_1 \cdot e_2 = \beta e_3,  e_2 \cdot e_2  = e_1,$ \\ 
$[ e_1 , e_2 ] = e_3.$
\end{tabular}%
\right. $

    \item ${\rm T}_{05}:\left\{ 
\begin{tabular}{l}
$e_1 \cdot e_1 = e_3,  e_1 \cdot e_2 = e_1,  e_2 \cdot e_2  = e_2,  e_2 \cdot e_3 = e_3,$ \\ 
$[ e_1 , e_2 ] = e_3.$
\end{tabular}%
\right. $

    \item ${\rm T}_{06}:\left\{ 
\begin{tabular}{l}
$e_1 \cdot e_2= e_1, e_2 \cdot e_2 = e_2 , e_2 \cdot e_3= e_3,$ \\ 
$[ e_1 , e_2 ] = e_3.$
\end{tabular}%
\right. $
 
 \item ${\rm T}_{07}^{\beta}:\left\{ 
\begin{tabular}{l}
$e_1 \cdot e_3 = \beta e_1$, $e_2 \cdot e_3 = \beta e_2$, $e_3 \cdot e_3 = \beta e_3,$ \\ 
$[e_1, e_{3}]=e_1,$  $[e_2, e_{3}]=e_2.$%
\end{tabular}%
\right. $
 
\item ${\rm T}_{08}:\left\{ 
\begin{tabular}{l}
$e_3 \cdot e_3 = e_1,$ \\ 
$[e_1, e_{3}]=e_1,$  $[e_2, e_{3}]=e_2.$%
\end{tabular}%
\right. $
 
  \item ${\rm T}_{09}^{\alpha, \beta}:\left\{ 
\begin{tabular}{l}
$ e_1 \cdot e_3= \beta  e_1, \ e_2 \cdot e_3 = \beta  e_2,\  e_3 \cdot e_3 = \beta e_3,$ \\ 
 $[e_1, e_{3}]=e_1+e_2,$ $[e_2, e_3]= \alpha e_2.$
\end{tabular}%
\right. $

 \item ${\rm T}_{10}^{\alpha}:\left\{ 
\begin{tabular}{l}
$ e_3 \cdot e_3 = e_2,$ \\ 
 $[e_1, e_{3}]=e_1+e_2,$ $[e_2, e_3]= \alpha e_2.$
\end{tabular}%
\right. $

 \item ${\rm T}_{11}^{\alpha}:\left\{ 
\begin{tabular}{l}
$ e_3 \cdot e_3 = e_1,$ \\ 
 $[e_1, e_{3}]=e_1+e_2,$ $[e_2, e_3]= \alpha e_2.$
\end{tabular}%
\right. $

 \item ${\rm T}_{12}^{\beta}:\left\{ 
\begin{tabular}{l}
$ e_1 \cdot e_1 =  e_2,\ e_1 \cdot e_3 =  \beta e_1, \  e_2 \cdot e_3 =   \beta e_2,  \  e_3 \cdot e_3 =   \beta  e_3,$ \\ 
 $[e_1, e_{3}]=e_1+e_2,$ $[e_2, e_3]= 2 e_2.$
\end{tabular}%
\right. $

 \item ${\rm T}_{13}:\left\{ 
\begin{tabular}{l}
$e_1 \cdot e_1 =  e_2, \  e_3 \cdot e_3 =  e_2 ,$ \\ 
 $[e_1, e_{3}]=e_1+e_2,$ $[e_2, e_3]= 2 e_2.$
\end{tabular}%
\right. $

 \item ${\rm T}_{14}:\left\{ 
\begin{tabular}{l}
$e_1 \cdot e_3 =  e_2, \ e_3 \cdot e_3 =  e_1,$ \\ 
 $[e_1, e_{3}]=e_1+e_2,$ $[e_2, e_3]= 2 e_2.$
\end{tabular}%
\right. $

 \item ${\rm T}_{15}:\left\{ 
\begin{tabular}{l}
$e_1 \cdot e_3 =  e_2 ,$ \\ 
 $[e_1, e_{3}]=e_1+e_2,$ $[e_2, e_3]= 2 e_2.$
\end{tabular}%
\right. $

 \item ${\rm T}_{16}:\left\{ 
\begin{tabular}{ll}
$ e_3 \cdot e_3= e_1+e_2,$ \\ 
 $[e_1, e_{3}]=e_1+e_2.$
\end{tabular}%
\right. $
 
  \item ${\rm T}_{17}^{\beta}:\left\{ 
\begin{tabular}{l}
$e_1 \cdot e_1=e_2,$ \
$e_1 \cdot e_2=- e_2,$ \
$e_1 \cdot e_3=\beta  e_1,$ \
$e_2 \cdot e_2=   e_2,$ \
$e_2 \cdot e_3= \beta  e_2,$ \
$e_3 \cdot e_3= \beta  e_3,$ \\ 
 $[e_1, e_{3}]=e_1+e_2$.
\end{tabular}%
\right. $

 \item ${\rm T}_{18}:\left\{ 
\begin{tabular}{l}
$e_1 \cdot e_1=e_2,$ \
$e_1 \cdot e_2=- e_2,$ \
$e_2 \cdot e_2=    e_2,$ \
$e_3 \cdot e_3= e_1+e_2,$ \\ 
 $[e_1, e_{3}]=e_1+e_2$.
\end{tabular}%
\right. $

 \item ${\rm T}_{19}^{\gamma}:\left\{ 
\begin{tabular}{l}
$e_1 \cdot e_3= \gamma e_1+ \gamma e_2,$ \
$e_3 \cdot e_3= \gamma  e_3,$ \\ 
 $[e_1, e_{3}]=e_1+e_2$.
\end{tabular}%
\right. $

 \item ${\rm T}_{20}:$ $ e_1\cdot e_1 = e_1, e_2\cdot e_2 = e_2, e_3\cdot e_3 = e_3.$

 \item ${\rm T}_{21}:$ $ e_1\cdot e_1 = e_1,   e_2\cdot e_2 = e_2, e_1\cdot e_3 = e_3.$

 \item ${\rm T}_{22}:$ $e_1 \cdot e_1 = e_1, e_2 \cdot e_2 = e_2.$ 
 
 \item ${\rm T}_{23}:$ $e_1 \cdot e_1 = e_1, e_1 \cdot e_2  =e_2, e_1\cdot e_3=e_3, e_2 \cdot e_2 = e_3.$ 

 \item ${\rm T}_{24}:$ $e_1 \cdot e_1 = e_1, e_1\cdot e_2  =e_2, e_1 \cdot e_3 =e_3.$  

 \item ${\rm T}_{25}:$ $ e_1 \cdot e_1  = e_1, e_1 \cdot e_2  =e_2.$  

 \item ${\rm T}_{26}:$ $e_1 \cdot e_1  = e_1, e_2 \cdot e_2 = e_3.$ 

 \item ${\rm T}_{27}:$ $e_1 \cdot e_1 = e_1.$ 

 \item ${\rm T}_{28}:$ $e_1 \cdot e_1  = e_2, e_1\cdot e_2 = e_3.$  

 \item ${\rm T}_{29}:$ $e_1\cdot e_2 = e_3.$ 

 \item ${\rm T}_{30}:$ $ e_1 \cdot e_1  = e_2. $

\end{itemize}

 where the parameters $\beta\in\mathbb{C}$ and $\gamma\in\mathbb{C}^*$. Between these algebras there are precisely the following non-trivial isomorphisms:
 \begin{itemize}
     \item ${\rm T}_{03}^{\beta_1}\cong {\rm T}_{03}^{\beta_2}$ if and only if $\beta_1=\pm\beta_2$.
     
     \item ${\rm T}_{09}^{\alpha_1, \beta_1}\cong {\rm T}_{09}^{\alpha_2, \beta_2}$ if and only if $(\alpha_2, \beta_2)=(\alpha_1, \beta_1)$ or  $(\alpha_2, \beta_2)=(\frac{1}{\alpha_1}, \frac{\beta_1}{\alpha_1})$.
          
     \item ${\rm T}_{11}^{\beta_1}\cong {\rm T}_{11}^{\beta_2}$ if and only if $\beta_2=\beta_1$ or $\beta_2=\frac{1}{\beta_1}$.

 \end{itemize}

\end{theoremA}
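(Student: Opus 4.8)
The plan is to organise the argument by the isomorphism type of the underlying Lie algebra $(\mathfrak{L},[\cdot,\cdot])$. The guiding observation is that an isomorphism $\varphi$ of transposed Poisson algebras is, by definition, a linear bijection with $\varphi([x,y])=[\varphi(x),\varphi(y)]$ and $\varphi(x\cdot y)=\varphi(x)\cdot\varphi(y)$; hence it restricts simultaneously to an isomorphism of the Lie parts and to an isomorphism of the commutative associative parts. From this, transposed Poisson algebras whose Lie parts are non-isomorphic can never be isomorphic, so I may treat each possibility for $(\mathfrak{L},[\cdot,\cdot])$ entirely in isolation, and within a fixed Lie algebra the admissible isomorphisms are exactly those induced by its automorphisms that moreover preserve the product $-\cdot-$.

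First I would dispose of the case $[\cdot,\cdot]=0$. Here the transposed Leibniz rule $2z\cdot[x,y]=[z\cdot x,y]+[x,z\cdot y]$ holds vacuously, so a transposed Poisson structure is nothing but a $3$-dimensional commutative associative algebra $(\mathfrak{L},\cdot)$. Importing the classification of such algebras recalled from \cite{gkp21} then produces exactly ${\rm T}_{20}$ through ${\rm T}_{30}$, and by the reduction above none of these can be isomorphic to an algebra with nonzero bracket. Next I would treat $[\cdot,\cdot]\neq 0$. By Theorem \ref{theoclassificLie} the Lie part is isomorphic to precisely one of $\mathfrak{sl}_2$, $\mathfrak{h}$, $\mathfrak{g}_1$ or $\mathfrak{g}_2^\alpha$, and the compatible multiplications have already been determined: the result of Ferreira, Kaygorodov and Lopatkin forces only the trivial structure ${\rm T}_{01}$ on $\mathfrak{sl}_2$; the theorem of Yuan and Hua \cite{yh21} yields ${\rm T}_{02}$–${\rm T}_{06}$ on $\mathfrak{h}$; Proposition \ref{TPAmult4} yields ${\rm T}_{07}$, ${\rm T}_{08}$ on $\mathfrak{g}_1$; and the three preceding propositions yield ${\rm T}_{09}$–${\rm T}_{19}$ on $\mathfrak{g}_2^\alpha$ (folding the exceptional value $\alpha=\frac{1}{2}$ into the $\alpha=2$ analysis via $\mathfrak{g}_2^{1/2}\cong\mathfrak{g}_2^{2}$). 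Since all multiplication tables in those statements are already reduced to normal form, concatenating them yields the full displayed list.

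Finally I would assemble the surviving non-trivial isomorphisms. The relations for ${\rm T}_{09}$ and ${\rm T}_{11}$ are exactly those established in the propositions, where they arise from letting the two-parameter automorphism group of $\mathfrak{g}_2^\alpha$ act together with the Lie isomorphism $\mathfrak{g}_2^\alpha\cong\mathfrak{g}_2^{\alpha^{-1}}$, giving $(\alpha,\beta)\sim(\alpha^{-1},\beta\alpha^{-1})$. The one genuinely new point is ${\rm T}_{03}^{\beta_1}\cong{\rm T}_{03}^{\beta_2}\iff \beta_1=\pm\beta_2$: here I would act by an automorphism $\phi$ of $\mathfrak{h}$, where $[\phi(e_1),\phi(e_2)]=\phi(e_3)$ forces $\phi(e_3)=(\det)e_3$; the requirement that the products $e_1\cdot e_1$ and $e_2\cdot e_2$ remain zero then restricts $\phi$ to be either diagonal or a swap in the $e_1,e_2$ plane, yielding $\beta_2=\beta_1$ or $\beta_2=-\beta_1$ respectively, which is the source of the sign.

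I expect the main obstacle to be bookkeeping rather than conceptual. The delicate tasks are completeness — confirming that no parameter value or structural shape has been omitted, particularly across the several $\mathfrak{g}_2^\alpha$ subcases $\alpha\neq 0,\frac{1}{2},2$, $\alpha=2$ and $\alpha=0$ — and non-redundancy, namely that the remaining parameters such as $\gamma$ in ${\rm T}_{19}$ are genuine invariants and that no further coincidences survive among the $\mathfrak{g}_2^\alpha$ families beyond the three listed. Both reduce to tracking the already-computed normal forms and verifying that the displayed isomorphism list is closed under composition with the identifications $\mathfrak{g}_2^\alpha\cong\mathfrak{g}_2^{\alpha^{-1}}$.
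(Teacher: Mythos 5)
Your proposal is correct and follows essentially the same route as the paper: the paper's proof of Theorem A is precisely the assembly of the preceding propositions (trivial structure on $\mathfrak{sl}_2$ via Ferreira--Kaygorodov--Lopatkin, Yuan--Hua's list on $\mathfrak{h}$, the $\mathfrak{g}_1$ and $\mathfrak{g}_2^{\alpha}$ propositions) together with the classification of $3$-dimensional commutative associative algebras for the zero-bracket case, using exactly your observation that an isomorphism of transposed Poisson algebras restricts to isomorphisms of both underlying structures. Your explicit automorphism computation justifying ${\rm T}_{03}^{\beta_1}\cong{\rm T}_{03}^{\beta_2}\iff\beta_1=\pm\beta_2$ is a small, correct supplement to a point the paper leaves implicit.
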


\section{Strong special transposed Poisson algebras}\label{spe3dim}

It is known that each Novikov-Poisson algebra under commutator product on non-associative multiplication gives a transposed Poisson algebra \cite{bai20}.
Let us say that a transposed Poisson algebra is special (strong special) if it can be embedded into (it is isomorphic to) a Novikov-Poisson algebra relative to the commutator bracket. 
Similarly, let us say that a transposed Poisson  algebra is  $D$-special  (strong special) (from ``differentially'') if it embeds into (it is isomorphic to) 
a commutative algebra with a derivation relative to the bracket 
$[x,y]=\mathfrak D(x)\cdot y-x\cdot \mathfrak D(y).$ Obviously, every $D$-special transposed Poisson algebra is a special one. Our main strategy for classifying all non-isomorphic strong $D$-special transposed Poisson algebras with a given associative commutative algebras $(\mathfrak A, \cdot)$ is as follows.
\begin{enumerate}
\item Find all derivations $\mathfrak{Der}({\mathfrak A}, \cdot)$; 

\item Describe the multiplication of the family of transposed Poisson algebras given by $({\mathfrak A}, \cdot, [x,y]=\mathfrak   D(x)\cdot y-x \cdot \mathfrak D(y))$; 

\item Consider the multiplication table of $({\mathfrak A}, [\cdot, \cdot] )$ under the action of elements from  ${\rm Aut}({\mathfrak A},  \cdot )$ and separate all non-isomorphic cases.
\end{enumerate}

\subsection{Strong $D$-special  $2$-dimensional transposed Poisson algebras}

Let us remember, that the classification of complex $2$-dimensional transposed Poisson algebras is given in \cite{bai20}.

\subsubsection{The algebraic classification of $2$-dimensional associative commutative algebras}

\begin{itemize} 

 \item ${\mathcal A}_{01}:$ $e_1 \cdot e_1 = e_1, e_2 \cdot e_2 = e_2.$ 

 \item ${\mathcal A}_{02}:$ $ e_1 \cdot e_1  = e_1, e_1 \cdot e_2  =e_2.$  

 \item ${\mathcal A}_{03}:$ $e_1 \cdot e_1 = e_1.$ 
 
 \item ${\mathcal A}_{04}:$ $ e_1 \cdot e_1  = e_2. $
 
 \end{itemize}
 
 By straightforward calculations we have the following result:
 
 \begin{lemma}
 
 Given $\mathcal{A} \in \left\{ {\mathcal A}_{01}, {\mathcal A}_{03} , {\mathcal A}_{04}\right\}$, then $[x,y]=\mathfrak D(x)\cdot y-x\cdot \mathfrak D(y) = 0$ for any derivation $\mathfrak D\in\mathfrak{Der}(\mathcal{A})$.
 
 \end{lemma}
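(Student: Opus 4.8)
The plan is to prove the lemma by computing, for each of the three associative commutative algebras $\mathcal{A}_{01}, \mathcal{A}_{03}, \mathcal{A}_{04}$, the full space of derivations $\mathfrak{Der}(\mathcal{A})$, and then verifying directly that the induced bracket $[x,y] = \mathfrak{D}(x)\cdot y - x\cdot \mathfrak{D}(y)$ vanishes identically. Since we work in dimension two, each derivation is given by a $2\times 2$ matrix in the basis $\{e_1, e_2\}$, and the derivation condition $\mathfrak{D}(x\cdot y) = \mathfrak{D}(x)\cdot y + x\cdot \mathfrak{D}(y)$ applied to the generators reduces to a small linear system whose solution space I would parametrize explicitly. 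The bracket, being bilinear and antisymmetric, is then determined by its single value $[e_1, e_2]$, so it suffices to check that this one element is zero for every derivation $\mathfrak{D}$.

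First I would handle $\mathcal{A}_{03}$, where $e_1\cdot e_1 = e_1$ and $e_2$ is annihilated by everything. Writing $\mathfrak{D}(e_i) = \sum_j d_{ji} e_j$, the relation $\mathfrak{D}(e_1) = \mathfrak{D}(e_1\cdot e_1) = 2\, \mathfrak{D}(e_1)\cdot e_1$ forces constraints on $d_{11}, d_{21}$, and the products $e_1\cdot e_2 = e_2\cdot e_2 = 0$ constrain the rest; then $[e_1,e_2] = \mathfrak{D}(e_1)\cdot e_2 - e_1\cdot \mathfrak{D}(e_2)$ collapses because $e_2$ lies in the annihilator and $e_1\cdot \mathfrak{D}(e_2)$ lands in the span forced to vanish. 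For $\mathcal{A}_{01}$, the two orthogonal idempotents $e_1, e_2$ each satisfy $e_i\cdot e_i = e_i$ and $e_1\cdot e_2 = 0$; applying the Leibniz rule to these three relations pins down $\mathfrak{D}$ to be diagonal with zero diagonal, i.e. $\mathfrak{D} = 0$, so the bracket is trivially zero. For $\mathcal{A}_{04}$, where $e_1\cdot e_1 = e_2$ and all other products vanish, the relation $\mathfrak{D}(e_2) = 2\,\mathfrak{D}(e_1)\cdot e_1$ together with $e_1\cdot e_2 = 0$ and $e_2\cdot e_2 = 0$ determines the derivations, and one checks $[e_1,e_2] = \mathfrak{D}(e_1)\cdot e_2 - e_1\cdot \mathfrak{D}(e_2)$ vanishes since both $e_1\cdot e_2$ and the relevant product of $e_1$ with $\mathfrak{D}(e_2)$ are forced to be zero.

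There is essentially no genuine obstacle here; the statement is exactly the kind of claim that the paper flags with ``by straightforward calculations,'' and the only care needed is bookkeeping. The mild subtlety is that $\mathcal{A}_{01}$ actually has \emph{no} nonzero derivations while $\mathcal{A}_{03}$ and $\mathcal{A}_{04}$ do admit nonzero derivations, yet the antisymmetrized bracket still vanishes; so the cleanest exposition treats the three cases separately rather than trying to argue uniformly. I would present each case as a two- or three-line computation of $\mathfrak{Der}(\mathcal{A})$ followed by the one-line evaluation of $[e_1,e_2]$, and conclude that in all three cases the induced transposed Poisson bracket is identically zero, which is precisely the assertion of the lemma.
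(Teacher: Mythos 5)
Your proposal is correct and is exactly the argument the paper intends: the lemma is stated after the phrase ``by straightforward calculations,'' and your case-by-case computation of $\mathfrak{Der}(\mathcal{A})$ for $\mathcal{A}_{01}$ (where $\mathfrak{D}=0$), $\mathcal{A}_{03}$ (where $\mathfrak{D}(e_1)=0$ and $\mathfrak{D}(e_2)=d\,e_2$), and $\mathcal{A}_{04}$ (where $\mathfrak{D}(e_1)=a e_1+b e_2$, $\mathfrak{D}(e_2)=2a e_2$, with $e_2$ in the annihilator), followed by evaluating $[e_1,e_2]$, is the direct verification the authors suppress. All three computations check out, so there is nothing to add.
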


\subsubsection{Strong $D$-special transposed Poisson algebras on  ${\mathcal A}_{02}$}\label{2spe}
Let $\mathfrak{D}$ be a derivation of ${\mathcal A}_{02},$
then 
\begin{center}
$\mathfrak{D}(e_2)=-\alpha e_2.$
\end{center}
Which gives 

${\mathcal D}_{01}^{\alpha}: 
\left\{ 
\begin{tabular}{l}
$e_1 \cdot e_1=e_1,$ \
$e_1 \cdot e_2=e_2,$ \\
 $[e_1,e_2]= \alpha e_2$.
\end{tabular}%
\right. $

All algebras from the family 
${\mathcal D}_{01}^{\alpha}$ are non-isomorphic and they exhaust all strong  $D$-special $2$-dimensional transposed Poisson algebras.

\subsubsection{Non-strong-$D$-special $2$-dimensional transposed Poisson algebras}
Thanks to subsubsection \ref{2spe} and \cite{bai20}, we are concluding 
that there are only two non-strong-$D$-special transposed Poisson algebras:

\begin{center}
    
${\mathcal N}_{01}: 
\left\{ 
\begin{tabular}{l}
$e_1\cdot e_1 = e_2,$\\  
$[e_1, e_2] = e_2.$

\end{tabular}%
\right. $
\text{ \ and \ } 
${\mathcal N}_{02}: 
\left\{ 
\begin{tabular}{l}
$e_1\cdot e_2 =  e_1, e_2\cdot e_2 = e_2,$ \\ 
$[e_1, e_2] = e_2.$

\end{tabular}%
\right. $

\end{center}

On the one hand, an straightforward verification shows that ${\mathcal N}_{01}$ is strong special using the Novikov-Poisson algebra:

\begin{center}
    
${\bf N}_{01}: 
\left\{ 
\begin{tabular}{l}
$e_2\cdot e_2 = e_1,$\\  
$e_2 \circ e_1 = -e_1.$

\end{tabular}%
\right. $
\end{center}

On the other hand, one can verify that the Novikov-Poisson algebras defined on $({\mathcal N}_{02}, \cdot)$ are:

\begin{center}
    
${\bf N}_{02}^{\alpha,\beta,\gamma}: 
\left\{ 
\begin{tabular}{l}
$e_1\cdot e_2 =  e_1, e_2\cdot e_2 = e_2,$\\  
$e_1 \circ e_2 = \alpha e_1, e_2 \circ e_1 = \beta e_1, e_2 \circ e_2 = \gamma e_1 + \alpha e_2.$

\end{tabular}%
\right. $
\end{center}

Then, observe that $e_1 \circ e_2 - e_2 \circ e_1 = (\alpha - \beta) e_1$ and $[e_1, e_2] = e_2$. Since the automorphisms of $({\mathcal N}_{02}, \cdot)$ verify that $\phi(e_1)=\lambda_{11}e_1,\phi(e_2) = e_2$ for $\lambda_{11}\neq0$, we conclude that ${\mathcal N}_{02}$ is not strong special.

\begin{corollary}
All complex $2$-dimensional transposed Poisson algebras are strong special.
\end{corollary}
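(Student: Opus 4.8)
The plan is to establish the corollary by a finite verification that runs over the complete list of complex $2$-dimensional transposed Poisson algebras and produces, for each one, a Novikov-Poisson algebra on the same underlying space whose commutator bracket---together with the given associative commutative product---reproduces it exactly. By the organization of this section, that list is exhausted (up to the trivial structures) by the strong $D$-special family $\mathcal{D}_{01}^{\alpha}$ together with the two exceptional algebras $\mathcal{N}_{01}$ and $\mathcal{N}_{02}$, so it suffices to realize each of these three as the transposed Poisson algebra attached to a Novikov-Poisson algebra via $[x,y]=x\circ y-y\circ x$.

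For the strong $D$-special family I would invoke the Gelfand--Dorfman construction. Given a derivation $\mathfrak{D}$ of $(\mathcal{A},\cdot)$, the product $x\circ y=-x\cdot\mathfrak{D}(y)$ (the Novikov product attached to the derivation $-\mathfrak{D}$, which is again a derivation) is left-symmetric and right-commutative, and it is compatible with $\cdot$; hence $(\mathcal{A},\cdot,\circ)$ is a Novikov-Poisson algebra. Its commutator is $x\circ y-y\circ x=-x\cdot\mathfrak{D}(y)+y\cdot\mathfrak{D}(x)=\mathfrak{D}(x)\cdot y-x\cdot\mathfrak{D}(y)$, which is precisely the bracket defining $\mathcal{D}_{01}^{\alpha}$; concretely, for $\mathcal{A}_{02}$ with $\mathfrak{D}(e_1)=0$ and $\mathfrak{D}(e_2)=-\alpha e_2$ one gets $e_1\circ e_2=\alpha e_2$, $e_2\circ e_1=0$, so the commutator is $\alpha e_2$ as required. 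This shows that every strong $D$-special transposed Poisson algebra, and in particular the whole family $\mathcal{D}_{01}^{\alpha}$, is strong special. Any trivial structure (zero bracket) is handled at once by the zero Novikov product $\circ=0$, for which $(\mathcal{A},\cdot,0)$ is Novikov-Poisson with vanishing commutator.

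It remains to realize $\mathcal{N}_{01}$ and $\mathcal{N}_{02}$ explicitly. For $\mathcal{N}_{01}$ the Novikov-Poisson algebra $\mathbf{N}_{01}$ already displayed does the job: a direct check confirms that its commutative product and the commutator of its Novikov product reproduce $\mathcal{N}_{01}$. For $\mathcal{N}_{02}$ I would work inside the family $\mathbf{N}_{02}^{\alpha,\beta,\gamma}$ of Novikov-Poisson products on $(\mathcal{N}_{02},\cdot)$. There the commutator of $\circ$ on the generating pair is the scalar multiple $(\alpha-\beta)e_1$, and choosing the parameters so that this scalar equals the structure constant of the Lie bracket of $\mathcal{N}_{02}$ (for instance $\alpha=1$, $\beta=0$, $\gamma=0$) yields a Novikov-Poisson algebra whose associated transposed Poisson structure is exactly $\mathcal{N}_{02}$. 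Thus $\mathcal{N}_{02}$ is strong special as well, and all three families---hence all complex $2$-dimensional transposed Poisson algebras---are strong special.

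The main obstacle is the case of $\mathcal{N}_{02}$: one must first identify the parameters $(\alpha,\beta,\gamma)$ for which the commutator of $\circ$ matches the prescribed bracket, and then confirm that this specific product genuinely satisfies all the Novikov-Poisson identities---left-symmetry and right-commutativity of $\circ$ together with the two compatibility identities relating $\circ$ and $\cdot$. The other point requiring care is the sign bookkeeping in the $D$-special step, namely checking that passing from $\mathfrak{D}$ to $-\mathfrak{D}$ lands the commutator on $\mathfrak{D}(x)\cdot y-x\cdot\mathfrak{D}(y)$ rather than on its negative; once these two points are settled, everything else reduces to short explicit computations on two-dimensional algebras.
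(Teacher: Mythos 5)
Your handling of the family ${\mathcal D}_{01}^{\alpha}$ and of ${\mathcal N}_{01}$ is fine and agrees with the paper: the Gelfand--Dorfman product $x\circ y=-x\cdot\mathfrak{D}(y)$ is indeed Novikov, is compatible with $\cdot$, and has commutator $\mathfrak{D}(x)\cdot y-x\cdot\mathfrak{D}(y)$, so every strong $D$-special algebra is strong special; and ${\bf N}_{01}$ realizes ${\mathcal N}_{01}$ exactly as in the paper. The genuine gap is the case you yourself singled out as the main obstacle, ${\mathcal N}_{02}$, and there your argument is not merely incomplete but false. For every member of the family ${\bf N}_{02}^{\alpha,\beta,\gamma}$ the commutator satisfies $e_1\circ e_2-e_2\circ e_1=(\alpha-\beta)e_1$, which lies in the line $\mathbb{C}e_1$, whereas the bracket of ${\mathcal N}_{02}$ is $[e_1,e_2]=e_2\notin\mathbb{C}e_1$. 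There is no ``matching of structure constants'' to be done: your suggested choice $(\alpha,\beta,\gamma)=(1,0,0)$ yields the transposed Poisson algebra with $[e_1,e_2]=e_1$, which is an isomorphic copy of a strong $D$-special algebra of the type already covered, not ${\mathcal N}_{02}$. A change of basis cannot repair this either, because $e_2$ is the unit of $({\mathcal N}_{02},\cdot)$, so every automorphism has $\phi(e_2)=e_2$ and $\phi(e_1)=\lambda_{11}e_1$ with $\lambda_{11}\neq 0$, hence preserves the line $\mathbb{C}e_1$ in which all these commutators live. This is precisely the computation the paper carries out, and its conclusion is the opposite of yours: ${\mathcal N}_{02}$ is \emph{not} strong special, i.e.\ it is not isomorphic to the commutator algebra of any Novikov--Poisson structure on its own underlying commutative algebra.

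Consequently the corollary cannot be obtained by the finite verification you propose: any correct treatment of ${\mathcal N}_{02}$ must leave the two-dimensional space and exhibit an \emph{embedding} into a larger Novikov--Poisson algebra (speciality in the non-strong sense), rather than a Novikov product on $({\mathcal N}_{02},\cdot)$ itself. Note in this connection that the paper's own text establishes the non-strong-speciality of ${\mathcal N}_{02}$ in the paragraph immediately preceding the corollary, so the statement has to be read accordingly for that one algebra; in any event, the decisive step of your proof asserts something the paper explicitly disproves, and of the two points you deferred as ``short explicit computations''---the parameter matching and the verification of the Novikov--Poisson identities---it is the matching that is impossible, since a vector proportional to $e_1$ can never equal $e_2$, no matter how the parameters are chosen.
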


\subsection{Strong $D$-special  $3$-dimensional transposed Poisson algebras} 

\subsubsection{The algebraic classification of $3$-dimensional associative commutative algebras} The classification of the  $3$-dimensional associative commutative algebras was extracted from  \cite{gkp21}.

\begin{itemize}
 \item ${\mathfrak A}_{01}:$ $ e_1\cdot e_1 = e_1, e_2\cdot e_2 = e_2, e_3\cdot e_3 = e_3.$

 \item ${\mathfrak A}_{02}:$ $ e_1\cdot e_1 = e_1,   e_2\cdot e_2 = e_2, e_1\cdot e_3 = e_3.$

 \item ${\mathfrak A}_{03}:$ $e_1 \cdot e_1 = e_1, e_2 \cdot e_2 = e_2.$ 
 
 \item ${\mathfrak A}_{04}:$ $e_1 \cdot e_1 = e_1, e_1 \cdot e_2  =e_2, e_1\cdot e_3=e_3, e_2 \cdot e_2 = e_3.$ 

 \item ${\mathfrak A}_{05}:$ $e_1 \cdot e_1 = e_1, e_1\cdot e_2  =e_2, e_1 \cdot e_3 =e_3.$  

 \item ${\mathfrak A}_{06}:$ $ e_1 \cdot e_1  = e_1, e_1 \cdot e_2  =e_2.$  

 \item ${\mathfrak A}_{07}:$ $e_1 \cdot e_1  = e_1, e_2 \cdot e_2 = e_3.$ 

 \item ${\mathfrak A}_{08}:$ $e_1 \cdot e_1 = e_1.$ 

 \item ${\mathfrak A}_{09}:$ $e_1 \cdot e_1  = e_2, e_1\cdot e_2 = e_3.$  

 \item ${\mathfrak A}_{10}:$ $e_1\cdot e_2 = e_3.$ 

 \item ${\mathfrak A}_{11}:$ $ e_1 \cdot e_1  = e_2. $
 
 \end{itemize}
 
 By straightforward calculations we have the following result:
 
 \begin{lemma}
 
 Given $\mathfrak{A} \in \left\{ {\mathfrak A}_{01}, {\mathfrak A}_{03}, {\mathfrak A}_{07}, {\mathfrak A}_{08}, {\mathfrak A}_{11}\right\}$, then 
 $[x,y]=\mathfrak D(x)\cdot y-x\cdot \mathfrak D(y) = 0$ for any derivation $\mathfrak D\in\mathfrak{Der}(\mathfrak{A})$.
 
 \end{lemma}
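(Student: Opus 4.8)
The plan is to exploit a structural feature common to the five listed algebras: in the displayed basis each of them is \emph{diagonal}, meaning that $e_i\cdot e_j=0$ whenever $i\neq j$, the only possibly nonzero products being the squares $e_i\cdot e_i$. Since the bracket $[x,y]=\mathfrak{D}(x)\cdot y-x\cdot\mathfrak{D}(y)$ is bilinear and visibly antisymmetric, it suffices to verify that $[e_i,e_j]=0$ for the three pairs with $i<j$. Writing $\mathfrak{D}(e_i)=\sum_k d_{ik}e_k$ and using diagonality, the only surviving terms are $\mathfrak{D}(e_i)\cdot e_j=d_{ij}\,e_j^2$ and $e_i\cdot\mathfrak{D}(e_j)=d_{ji}\,e_i^2$, so that
\begin{center}
$[e_i,e_j]=d_{ij}\,e_j^2-d_{ji}\,e_i^2.$
\end{center}

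Two elementary tools then close the argument. First, applying $\mathfrak{D}$ to the relation $e_i\cdot e_j=0$ gives the constraint $d_{ij}\,e_j^2+d_{ji}\,e_i^2=0$. Second, a derivation of a commutative associative algebra annihilates every idempotent: from $\mathfrak{D}(e)=\mathfrak{D}(e\cdot e)=2\,e\cdot\mathfrak{D}(e)$ one gets $e\cdot\mathfrak{D}(e)=0$ and hence $\mathfrak{D}(e)=0$. With these in hand I would split the three pairs into two cases according to whether the relevant squares vanish.

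If one of the squares, say $e_j^2$, is zero, then $d_{ij}\,e_j^2=0$, and the constraint forces $d_{ji}\,e_i^2=0$ as well, so $[e_i,e_j]=0$ at once; by antisymmetry the same holds if $e_i^2=0$. This disposes of every pair except those in which \emph{both} squares are nonzero. For such a pair at least one of the two generators is idempotent in each of the five algebras (both generators in $\mathfrak{A}_{01}$ and $\mathfrak{A}_{03}$; the generator $e_1$ in $\mathfrak{A}_{07}$; while $\mathfrak{A}_{08}$ and $\mathfrak{A}_{11}$ have no such pair at all, their only nonzero square being $e_1^2$). Say $e_j$ is idempotent; then $\mathfrak{D}(e_j)=0$, so $d_{ji}=0$, and the constraint gives $d_{ij}\,e_j^2=0$, whence $[e_i,e_j]=0$ once more.

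The only place demanding any care is precisely this last, both-squares-nonzero case, and within it the algebra $\mathfrak{A}_{07}$: there $e_2$ has $e_2^2=e_3\neq0$ yet is \emph{not} idempotent, so the vanishing of the pair $(e_1,e_2)$ must be extracted from the idempotency of $e_1$ rather than of $e_2$. Everything else reduces to the one-line computations indicated above, which is what is meant by the phrase ``by straightforward calculations'' preceding the statement.
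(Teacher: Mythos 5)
Your proof is correct, and it takes a genuinely different route from the paper, which offers no written argument at all: the lemma is prefaced only by ``by straightforward calculations'', meaning what the authors carry out explicitly for the remaining algebras ${\mathfrak A}_{02}, {\mathfrak A}_{04}, {\mathfrak A}_{05}, {\mathfrak A}_{06}, {\mathfrak A}_{09}, {\mathfrak A}_{10}$ in the following subsections, namely computing the derivation algebra of each of the five algebras separately and observing that the induced bracket vanishes. You replace those five computations with a single uniform argument that never needs a full description of $\mathfrak{Der}(\mathfrak{A})$: diagonality of the multiplication table reduces every basis bracket to $[e_i,e_j]=d_{ij}\,e_j^2-d_{ji}\,e_i^2$, differentiating $e_i\cdot e_j=0$ gives $d_{ij}\,e_j^2+d_{ji}\,e_i^2=0$ (so in fact $[e_i,e_j]=2d_{ij}\,e_j^2$, and it would suffice to kill that single term), and the pairs not disposed of by a vanishing square are handled by the standard fact that derivations annihilate idempotents. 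Your case check is accurate, including the one delicate instance you flag (${\mathfrak A}_{07}$, pair $(e_1,e_2)$, where $e_2^2=e_3\neq 0$ but only $e_1$ is idempotent), and your argument proves a slightly more general statement about diagonal commutative associative algebras; it also covers the paper's earlier $2$-dimensional lemma for ${\mathcal A}_{01},{\mathcal A}_{03},{\mathcal A}_{04}$ verbatim. Two minor points: the step ``from $\mathfrak{D}(e)=2\,e\cdot\mathfrak{D}(e)$ one gets $e\cdot\mathfrak{D}(e)=0$'' silently multiplies by $e$ and uses associativity together with $e^2=e$, via $e\cdot\mathfrak{D}(e)=2\,e^2\cdot\mathfrak{D}(e)=2\,e\cdot\mathfrak{D}(e)$, and deserves one explicit line; and what your shortcut gives up, relative to the brute-force route the paper evidently intends, is the explicit derivation algebras themselves, which the authors' method produces as a by-product and reuses for the algebras whose brackets do not vanish.
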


\subsubsection{Strong $D$-special transposed Poisson algebras on  ${\mathfrak A}_{02}$}
Let $\mathfrak{D}$ be a derivation of ${\mathfrak A}_{02},$
then 
\begin{center}
$\mathfrak{D}(e_3)=-\alpha e_3.$
\end{center}
Which gives 

${\rm A}_{01}^{\alpha}: 
\left\{ 
\begin{tabular}{l}
$e_1 \cdot e_1=e_1,$ \
$e_2 \cdot e_2=e_2,$ \
$e_1 \cdot e_3=    e_3,$  \\ 
 $[e_1,e_3]= \alpha e_3$.
\end{tabular}%
\right. $

All algebras from the family 
${\rm A}_{01}^{\alpha}$ are non-isomorphic and under the following change of basis:
\begin{center}
    $E_1=e_3-e_2,$ \ $E_2=e_2,$ \ $E_3=-\alpha^{-1}(e_1+e_2),$
\end{center}
we have that  ${\rm A}_{01}^{\alpha} \cong {\rm T}_{17}^{-\alpha^{-1}}$ for $\alpha\neq0$. 

\subsubsection{Strong $D$-special transposed Poisson algebras on  ${\mathfrak A}_{04}$}
Let $\mathfrak{D}$ be a derivation of ${\mathfrak A}_{04},$
then 
\begin{center}
$\mathfrak{D}(e_2)= -\alpha e_2  -\beta e_3,$
$\mathfrak{D}(e_3)= -2 \alpha e_3.$
\end{center}
Which gives 

${\rm A}_{02}^{\alpha, \beta}: 
\left\{ 
\begin{tabular}{l}
$e_1 \cdot e_1 = e_1, e_1 \cdot e_2  =e_2, e_1\cdot e_3=e_3, e_2 \cdot e_2 = e_3,$   \\ 
 $[e_1,e_2]= \alpha e_2 + \beta e_3, [e_1,e_3]= 2 \alpha e_3$.
\end{tabular}%
\right. $

Let us consider two separate cases:

\begin{enumerate}
    \item[$\bullet$] $\alpha=0,$ then
    under  the following changing 
  \begin{center}
    $E_1=-\beta e_2,$ \ $E_2=e_1,$ \ $E_3=-\beta^2 e_3,$
\end{center}
we have that  ${\rm A}_{02}^{0,\beta} \cong {\rm T}_{05}$ for $\beta\neq0$. 

   \item[$\bullet$] $\alpha\neq 0,$ then
    under  the following changing 
  \begin{center}
    $E_1= e_2+(1-\beta\alpha^{-1})e_3,$ \ $E_2=e_3,$ \ $E_3=-\alpha^{-1} e_1,$
\end{center}
we have that  ${\rm A}_{02}^{\alpha,\beta} \cong {\rm T}_{12}^{-\alpha^{-1}}.$
\end{enumerate}

\subsubsection{Strong $D$-special transposed Poisson algebras on  ${\mathfrak A}_{05}$}
Let $\mathfrak{D}$ be a derivation of ${\mathfrak A}_{05},$
then 
\begin{center}
$\mathfrak{D}(e_2)=-\alpha e_2 - \beta e_3,$ $\mathfrak{D}(e_3)= -\gamma e_2 - \delta e_3.$
\end{center}
Which gives 

${\rm A}_{03}^{\alpha, \beta, \gamma, \delta}: 
\left\{ 
\begin{tabular}{l}
$e_1 \cdot e_1 = e_1, e_1\cdot e_2  =e_2, e_1 \cdot e_3 =e_3,$  \\ 
 $[e_1,e_2]= \alpha e_2 + \beta e_3, [e_1,e_3]= \gamma e_2 + \delta e_3$.
\end{tabular}%
\right. $

It is easy to see that if $({\rm T}_j, \cdot) \cong {\mathfrak A}_{05}$ then $j=06, 07,09.$ Obtaining the following mutually exclusive cases for suitable parameters $\alpha'$ and $\beta'$:
$${\rm T}_{06} \cong {\rm A}_{03}^{\alpha, \beta, -\frac{\alpha^2}{\beta}, -\alpha}, \ {\rm T}_{07}^{\alpha} \cong {\rm A}_{03}^{-\alpha^{-1}, 0, 0, -\alpha^{-1}}, {\rm T}_{09}^{\alpha',\beta'} \cong {\rm A}_{03}^{\alpha, \beta, \gamma, \delta}.$$

%
%
%

\subsubsection{Strong $D$-special transposed Poisson algebras on  ${\mathfrak A}_{06}$}
Let $\mathfrak{D}$ be a derivation of ${\mathfrak A}_{06},$
then 
\begin{center}
$\mathfrak{D}(e_2)= - \alpha e_2, \mathfrak{D}(e_3)= \beta e_3.$
\end{center}
Which gives 

${\rm A}_{04}^{\alpha}: 
\left\{ 
\begin{tabular}{l}
$ e_1 \cdot e_1  = e_1, e_1 \cdot e_2  =e_2,$  \\ 
 $[e_1,e_2]= \alpha e_2 $.
\end{tabular}%
\right. $

All algebras from the family 
${\rm A}_{04}^{\alpha}$ are non-isomorphic and under the following changing
\begin{center}
    $E_1=e_2-e_3,$ \ $E_2=e_3,$ \ $E_3=-\alpha^{-1}e_1,$
\end{center}
we have that  ${\rm A}_{04}^{\alpha} \cong {\rm T}_{19}^{-\alpha^{-1}}$ for $\alpha\neq0$.

\subsubsection{Strong $D$-special transposed Poisson algebras on  ${\mathfrak A}_{09}$}
Let $\mathfrak{D}$ be a derivation of ${\mathfrak A}_{09},$
then 
\begin{center}
$\mathfrak{D}(e_1)= -\alpha e_1 + \beta e_2 + \gamma e_3$, $\mathfrak{D}(e_2)= -2 \alpha e_2 + 2\beta e_3$, $\mathfrak{D}(e_3)= -3\alpha e_3$, 
\end{center}
Which gives 

${\rm A}_{05}^{\alpha}: 
\left\{ 
\begin{tabular}{l}
$e_1 \cdot e_1  = e_2, e_1\cdot e_2 = e_3,$  \\ 
 $[e_1,e_2]= \alpha e_3$.
\end{tabular}%
\right. $

All algebras from the family 
${\rm A}_{05}^{\alpha}$ are non-isomorphic and under the following changing
\begin{center}
    $E_1=e_2,$ \ $E_2=e_1,$ \ $E_3= - \alpha e_3,$
\end{center}
we have that  ${\rm A}_{05}^{\alpha} \cong {\rm T}_{04}^{-\alpha^{-1}}$ for $\alpha\neq0$.

\subsubsection{Strong $D$-special transposed Poisson algebras on  ${\mathfrak A}_{10}$}
Let $\mathfrak{D}$ be a derivation of ${\mathfrak A}_{10},$
then 
\begin{center}
$\mathfrak{D}(e_3)=\alpha e_1 + \delta e_3,$ $\mathfrak{D}(e_2)= \beta e_2 + \gamma e_3,$ $\mathfrak{D}(e_3)=(\alpha+\beta) e_3.$ 
\end{center}
Denoting $\epsilon = \alpha - \beta$, we have

${\rm A}_{06}^{\epsilon}: 
\left\{ 
\begin{tabular}{l}
$e_1\cdot e_2 = e_3,$  \\ 
 $[e_1,e_2]= \epsilon e_3 $.
\end{tabular}%
\right. $

All algebras from the family 
${\rm A}_{06}^{\epsilon}$ are isomorphic only on the case
${\rm A}_{06}^{\epsilon}\cong {\rm A}_{06}^{-\epsilon}$ 
and under the following change of basis
\begin{center}
    $E_1=e_2,$ \ $E_2=e_1,$ \ $E_3= - \epsilon e_3,$
\end{center}
we have that  ${\rm A}_{06}^{\epsilon} \cong {\rm T}_{03}^{-\epsilon^{-1}}$ for $\epsilon\neq0$.

\subsection{Classification theorem}
 The results of the previous subsection together with the classification of the commutative associative algebras of dimension three (we recall the classification that was used in \cite{gkp21}), give us the following classification theorem.

\begin{theoremB} 
Let $(\mathfrak{L}, \cdot,  [\cdot, \cdot ])$ be a nontrivial complex 
$3$-dimensional transposed Poisson algebra (i.e. $\cdot\neq 0$ and $[\cdot, \cdot ] \neq 0$).
If $(\mathfrak{L}, \cdot,  [\cdot, \cdot ])$ is a strong $D$-special transposed Poisson algebra
then it is isomorphic to one and only one listed below:
 
\begin{longtable}{lllll}

${\rm D}_{01}^{\alpha}$ & $:$ & 
${\rm A}_{01}^{\alpha}$ & $:$ & 
$\left\{ 
\begin{tabular}{l}
$e_1 \cdot e_1=e_1,$ \
$e_2 \cdot e_2=e_2,$ \
$e_1 \cdot e_3=    e_3,$  \\ 
 $[e_1,e_3]= \alpha e_3$.
\end{tabular}
\right. $\\

${\rm D}_{02}$ & $:$ & 
${\rm A}_{02}^{0,1}$&$:$ & 
$\left\{ 
\begin{tabular}{l}
$e_1 \cdot e_1 = e_1, e_1 \cdot e_2  =e_2, e_1\cdot e_3=e_3, e_2 \cdot e_2 = e_3,$   \\ 
 $[e_1,e_2]=   e_3 $.
\end{tabular}%
\right. $\\

${\rm D}_{03}^{\alpha}$ & $:$ & 
${\rm A}_{02}^{\alpha,0}$&$:$ & 
$\left\{ 
\begin{tabular}{l}
$e_1 \cdot e_1 = e_1, e_1 \cdot e_2  =e_2, e_1\cdot e_3=e_3, e_2 \cdot e_2 = e_3,$   \\ 
 $[e_1,e_2]=   e_2 , [e_1,e_3]= 2  e_3$.
\end{tabular}%
\right. $\\

${\rm D}_{04}$ & $:$ & 
${\rm A}_{03}^{0,1,0,0}$&$:$ & 
$\left\{ 
\begin{tabular}{l}
$e_1 \cdot e_1 = e_1, e_1\cdot e_2  =e_2, e_1 \cdot e_3 =e_3,$  \\ 
 $[e_1,e_2]=  e_3 $.
\end{tabular}%
\right. $\\

${\rm D}_{05}^{\alpha}$ & $:$ & 
${\rm A}_{03}^{\alpha,0,0,\alpha}$&$:$ & 
$\left\{ 
\begin{tabular}{l}
$e_1 \cdot e_1 = e_1, e_1\cdot e_2  =e_2, e_1 \cdot e_3 =e_3,$  \\ 
 $[e_1,e_2]= \alpha e_2 , [e_1,e_3]= \alpha   e_3$.
\end{tabular}%
\right. $\\

${\rm D}_{06}^{\alpha,\beta}$ & $:$ & 
${\rm A}_{03}^{\alpha,0,\beta,\beta}$&$:$ & 
$\left\{ 
\begin{tabular}{l}
$e_1 \cdot e_1 = e_1, e_1\cdot e_2  =e_2, e_1 \cdot e_3 =e_3,$  \\ 
 $[e_1,e_2]= \alpha  e_2 , [e_1,e_3]= \beta e_2 + \beta e_3$.
\end{tabular}%
\right. $\\

${\rm D}_{06}^{\alpha}$ & $:$ & 
${\rm A}_{04}^{\alpha}$&$:$ & 
$\left\{ 
\begin{tabular}{l}
$ e_1 \cdot e_1  = e_1, e_1 \cdot e_2  =e_2,$  \\ 
 $[e_1,e_2]= \alpha e_2 $.
\end{tabular}%
\right. $\\

${\rm D}_{07}^{\alpha}$ & $:$ & 
${\rm A}_{05}^{\alpha}$&$:$ & 
$\left\{ 
\begin{tabular}{l}
$e_1 \cdot e_1  = e_2, e_1\cdot e_2 = e_3,$  \\ 
 $[e_1,e_2]= \alpha e_3$.
\end{tabular}%
\right. $\\

${\rm D}_{08}^{\alpha}$ & $:$ & 
${\rm A}_{06}^{\alpha}$&$:$ & 
$\left\{ 
\begin{tabular}{l}
$e_1\cdot e_2 = e_3,$  \\ 
 $[e_1,e_2]= \alpha e_3 $.
\end{tabular}%
\right.$
\end{longtable}
 Where 
\begin{center}
${\rm D}_{06}^{\alpha, \beta} \cong {\rm D}_{06}^{\beta, \alpha}$ \ and \  
${\rm D}_{08}^{\alpha} \cong {\rm D}_{08}^{-\alpha}.$

\end{center} 

If $(\mathfrak{L}, \cdot,  [\cdot, \cdot ])$ is a non-strong-$D$-special transposed Poisson algebra
then it is isomorphic to one and only one listed below
(see, also Theorem A):
\begin{center}
    
${\rm T}_{02}, \ {\rm T}^{\alpha\neq 0}_{03}, \ {\rm T}_{08}, \ {\rm T}_{10}^{\alpha},  \ 
{\rm T}_{11}^{\alpha},  \ {\rm T}_{13},  
\ {\rm T}_{14}, \ {\rm T}_{15}, \ {\rm T}_{16},  \ {\rm T}_{18}.$

\end{center}

\end{theoremB}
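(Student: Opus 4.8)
The plan is to run the three-step procedure stated just above over the eleven commutative associative algebras $\mathfrak{A}_{01},\dots,\mathfrak{A}_{11}$, and then to cross-reference the output with Theorem A. First I would dispose of the forward direction, producing ${\rm D}_{01}$–${\rm D}_{08}$. For each $\mathfrak{A}_i$ the derivation algebra $\mathfrak{Der}(\mathfrak{A}_i)$ is obtained by solving the linear system imposed by the Leibniz rule on the basis products, which is routine. By the two preparatory Lemmas the five types $\mathfrak{A}_{01},\mathfrak{A}_{03},\mathfrak{A}_{07},\mathfrak{A}_{08},\mathfrak{A}_{11}$ yield $[x,y]=\mathfrak{D}(x)\cdot y-x\cdot\mathfrak{D}(y)\equiv 0$ for every derivation, so they contribute nothing with $[\cdot,\cdot]\neq 0$ and may be discarded at once.

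For each of the remaining six types I would insert a general derivation into the bracket, form the parametric family $(\mathfrak{A}_i,\cdot,[\cdot,\cdot]_{\mathfrak{D}})$, and reduce it modulo $\mathrm{Aut}(\mathfrak{A}_i,\cdot)$ exactly as in Section~\ref{alg3dim}, normalising the structure constants to canonical values. This produces the representatives ${\rm A}_{01}^{\alpha}$–${\rm A}_{06}^{\epsilon}$, which assemble into ${\rm D}_{01}$–${\rm D}_{08}$ once one records the residual identifications coming from automorphisms that rescale or interchange idempotents and nilpotent generators (these give ${\rm D}_{06}^{\alpha,\beta}\cong{\rm D}_{06}^{\beta,\alpha}$ and ${\rm D}_{08}^{\alpha}\cong{\rm D}_{08}^{-\alpha}$). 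Pairwise non-isomorphism of the entries is then read off either from the distinct associative type or, when two families live on the same $\mathfrak{A}_i$ (as on $\mathfrak{A}_{04}$ and $\mathfrak{A}_{05}$), from the non-isomorphic underlying Lie algebras $\mathfrak{h}$, $\mathfrak{g}_1$, $\mathfrak{g}_2^{\alpha}$ they carry.

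Next I would treat the converse. Since strong $D$-speciality is preserved by isomorphism and forces $(\mathfrak{L},\cdot)\cong(\mathfrak{A}_i,\cdot)$ for some $i$, the commutative associative part is an isomorphism invariant attached to each ${\rm T}_j$, and ${\rm T}_j$ is strong $D$-special exactly when its bracket agrees, up to isomorphism, with one of the derivation-brackets already computed on its own type. The positive matches are exhibited by the explicit changes of basis recorded above: ${\rm A}_{01}^{\alpha}\cong{\rm T}_{17}^{-\alpha^{-1}}$, ${\rm A}_{02}^{0,\beta}\cong{\rm T}_{05}$, ${\rm A}_{02}^{\alpha,\beta}\cong{\rm T}_{12}^{-\alpha^{-1}}$, the three $\mathfrak{A}_{05}$-identifications of ${\rm T}_{06}$, ${\rm T}_{07}^{\alpha}$, ${\rm T}_{09}^{\alpha,\beta}$, together with ${\rm A}_{04}^{\alpha}\cong{\rm T}_{19}^{-\alpha^{-1}}$, ${\rm A}_{05}^{\alpha}\cong{\rm T}_{04}^{-\alpha^{-1}}$ and ${\rm A}_{06}^{\epsilon}\cong{\rm T}_{03}^{-\epsilon^{-1}}$. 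Every ${\rm T}_j$ left unmatched is then declared non-strong-$D$-special, which gives the final short list.

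The main obstacle is the negative half of the converse: proving that the listed algebras genuinely cannot be realised. I would isolate two mechanisms. For ${\rm T}_{02},{\rm T}_{08},{\rm T}_{16},{\rm T}_{18}$ and the families ${\rm T}_{10}^{\alpha},{\rm T}_{11}^{\alpha}$ the point is to identify the commutative associative part, after a possibly nontrivial normalisation (for instance diagonalising $e_1^2=e_3^2=e_2$, or splitting off an idempotent from a nilpotent ideal), as one of the five types with identically vanishing derivation-bracket, so that the nonzero bracket of ${\rm T}_j$ can never be produced. For ${\rm T}_{13},{\rm T}_{14},{\rm T}_{15}$ the associative part is one of the nilpotent types $\mathfrak{A}_{09},\mathfrak{A}_{10}$, on which I would establish the structural dichotomy that every bracket $[x,y]=\mathfrak{D}(x)\cdot y-x\cdot\mathfrak{D}(y)$ has image inside the associative square $\mathfrak{A}^2$ and is therefore a \emph{nilpotent} Lie bracket (only the Heisenberg algebra $\mathfrak{h}$ or the abelian one occurs); since each of these ${\rm T}_j$ instead carries the non-nilpotent solvable bracket of $\mathfrak{g}_2^{\alpha}$, no derivation can reproduce it. The two delicate steps are thus computing each associative type correctly through its change of basis and proving this nilpotency dichotomy.
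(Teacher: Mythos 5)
Your proposal follows the paper's own strategy almost verbatim: the same three-step procedure over the eleven associative types, the same two preparatory lemmas discarding $\mathfrak{A}_{01},\mathfrak{A}_{03},\mathfrak{A}_{07},\mathfrak{A}_{08},\mathfrak{A}_{11}$, the same reduction modulo $\mathrm{Aut}(\mathfrak{A}_i,\cdot)$ producing ${\rm A}_{01}^{\alpha}$--${\rm A}_{06}^{\epsilon}$, and the same explicit identifications with the ${\rm T}_j$ of Theorem A. The one genuine divergence is your treatment of ${\rm T}_{13},{\rm T}_{14},{\rm T}_{15}$. The paper needs no separate argument there: having computed $\mathfrak{Der}(\mathfrak{A}_{09})$ and $\mathfrak{Der}(\mathfrak{A}_{10})$ exhaustively, it sees directly that the only realizable brackets are $[e_1,e_2]=\alpha e_3$, i.e.\ Heisenberg or abelian, and concludes by complementarity. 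Your nilpotency dichotomy is a correct alternative --- since any derivation preserves each power $\mathfrak{A}^j$, one gets $[\mathfrak{A}^j,\mathfrak{A}]\subseteq\mathfrak{A}^{j+1}$, so on a nilpotent associative algebra the derivation-bracket is a nilpotent Lie bracket, which $\mathfrak{g}_2^{2}$ is not --- but it is redundant given the explicit $\mathfrak{Der}$ computations you already committed to in the forward direction. What it buys is a computation-free, dimension-independent non-realizability criterion; what the paper's route buys is that no extra lemma must be proved.

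One point deserves a flag, though the fault lies in the statement rather than in your argument. Your positive match ${\rm A}_{06}^{\epsilon}\cong{\rm T}_{03}^{-\epsilon^{-1}}$ (which is exactly the paper's own computation on $\mathfrak{A}_{10}$) shows that ${\rm T}_{03}^{\alpha\neq 0}$ \emph{is} strong $D$-special, in direct contradiction with its appearance in the printed non-strong list; consistently, your two negative mechanisms account for only nine of the ten listed entries and cannot account for this one, and you never say how you would prove it. A careful run of your complementarity step over all parameter values would also surface ${\rm T}_{04}^{0}$, ${\rm T}_{12}^{0}$ and ${\rm T}_{17}^{0}$ (with associative parts $\mathfrak{A}_{11}$, $\mathfrak{A}_{11}$ and $\mathfrak{A}_{08}$ respectively, all in the vanishing-bracket list) as nontrivial non-strong-$D$-special algebras absent from the statement. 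So your method is sound and essentially the paper's, but to make the write-up a complete proof you should state the complementarity argument explicitly, track the degenerate parameter values, and record the resulting corrections to the final list rather than reproducing it as printed.
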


\section{The geometric classification of transposed Poisson algebras}\label{geo}

Given a vector space ${\mathbb V}$ of dimension $n$, the set of bilinear maps \begin{center}$\textrm{Bil}({\mathbb V} \times {\mathbb V}, {\mathbb V}) \cong \textrm{Hom}({\mathbb V} ^{\otimes2}, {\mathbb V})\cong ({\mathbb V}^*)^{\otimes2} \otimes {\mathbb V}$
\end{center} is a vector space of dimension $n^3$. The set of pairs of bilinear maps (or bilinear pairs) 
\begin{center}$\textrm{Bil}({\mathbb V} \times {\mathbb V}, {\mathbb V}) \oplus \textrm{Bil}({\mathbb V}\times {\mathbb V}, {\mathbb V}) \cong ({\mathbb V}^*)^{\otimes2} \otimes {\mathbb V} \oplus ({\mathbb V}^*)^{\otimes2} \otimes {\mathbb V}$ \end{center} which is a vector space of dimension $2n^3$. This vector space has the structure of the affine space $\mathbb{C}^{2n^3}$ in the following sense:
fixed a basis $e_1, \ldots, e_n$ of ${\mathbb V}$, then any pair with multiplication $(\mu, \mu')$, is determined by some parameters $c_{ij}^k, c_{ij}'^k \in \mathbb{C}$,  called {structural constants},  such that
$$\mu(e_i, e_j) = \sum_{p=1}^n c_{ij}^k e_k \textrm{ and } \mu'(e_i, e_j) = \sum_{p=1}^n c_{ij}'^k e_k$$
which corresponds to a point in the affine space $\mathbb{C}^{2n^3}$. Then a set of bilinear pairs $\mathcal S$ corresponds to an algebraic variety, i.e., a Zariski closed set if there are some polynomial equations in variables $c_{ij}^k, c_{ij}'^k$ with zero locus equal to the set of structural constants of the bilinear pairs in $\mathcal S$. Since given the identities defining transposed Poisson algebras we can obtain a set of polynomial equations in variables $c_{ij}^k, c_{ij}'^k$, the class of $n$-dimensional transposed Poisson algebras is a variety, denote it by $\mathcal{T}_{n}$.
Now, consider the following action of $\textrm{GL}({\mathbb V})$ on $\mathcal{T}_{n}$:
$$(g*(\mu, \mu'))(x,y) := (g \mu (g^{-1} x, g^{-1} y), g \mu' (g^{-1} x, g^{-1} y))$$
for $g\in\textrm{GL}({\mathbb V})$, $(\mu, \mu')\in \mathcal{T}_{n}$ and for any $x, y \in {\mathbb V}$. Observe that the $\textrm{GL}({\mathbb V})$-orbit of $(\mu, \mu')$, denoted $O((\mu, \mu'))$, contains all the structural constants of the bilinear pairs isomorphic to the transposed Poisson algebra with structural constants $(\mu, \mu')$.

The purpose of this section is to describe the irreducible components of the variety ${\mathcal{T}_{3}}$. Recall that any affine variety can be represented as a finite union of its irreducible components in a unique way.
Additionally, describing the irreducible components of  ${\mathcal{T}_{3}}$ gives us the rigid bilinear pairs of the variety, which are those bilinear pairs with an open $\textrm{GL}(\mathbb V)$-orbit. This is because a bilinear pair is rigid in a variety if and only if the closure of its orbit is an irreducible component of the variety.
For this, we have to introduce the following notion. Denote by $\overline{O((\mu, \mu'))}$ the closure of the orbit of $(\mu, \mu')\in{\mathcal{T}_{n}}$.

\begin{definition}
\rm Let ${\rm T} $ and ${\rm T}'$ be two $n$-dimensional transposed Poisson algebras and $(\mu, \mu'), (\lambda,\lambda') \in \mathcal{T}_{n}$ be their representatives in the affine space, respectively. We say ${\rm T}$ {degenerates}  to ${\rm T}'$, and write ${\rm T} \to {\rm T} '$, if $(\lambda,\lambda')\in\overline{O((\mu, \mu'))}$. If ${\rm T}  \not\cong {\rm T}'$, then we call it a  {proper degeneration}.

Conversely, if $(\lambda,\lambda')\not\in\overline{O((\mu, \mu'))}$ then we call it a 
{non-degeneration} and we write ${{\rm T} }\not\to {{\rm T} }'$.
\end{definition}

Furthermore, for a parametric family of transposed Poisson algebras we have the following notion.

\begin{definition}
\rm
Let ${{\rm T} }(*)=\{{{\rm T} }(\alpha): {\alpha\in I}\}$ be a family of $n$-dimensional transposed Poisson algebras and let ${{\rm T} }'$ be another transposed Poisson algebra. Suppose that ${{\rm T} }(\alpha)$ is represented by the structure $(\mu(\alpha),\mu'(\alpha))\in{\mathcal{T} }_n$ for $\alpha\in I$ and ${{\rm T} }'$ is represented by the structure $(\lambda, \lambda')\in{\mathcal{T} }_n$. We say the family ${{\rm T} }(*)$ {degenerates}   to ${{\rm T} }'$, and write ${{\rm T} }(*)\to {{\rm T} }'$, if $(\lambda,\lambda')\in\overline{\{O((\mu(\alpha),\mu'(\alpha)))\}_{\alpha\in I}}$.

Conversely, if $(\lambda,\lambda')\not\in\overline{\{O((\mu(\alpha),\mu'(\alpha)))\}_{\alpha\in I}}$ then we call it a  {non-degeneration}, and we write ${{\rm T} }(*)\not\to {{\rm T} }'$.

\end{definition}

Observe that ${\rm T}'$ corresponds to an irreducible component of $\mathcal{T}_3$ (more precisely, $\overline{{\rm T}'}$ is an irreducible component) if and only if ${{\rm T} }\not\to {{\rm T} }'$ for any 3-dimensional transposed Poisson algebra ${\rm T}$ and ${{\rm T}(*) }\not\to {{\rm T} }'$ for any parametric family of 3-dimensional transposed Poisson algebras ${\rm T}(*)$. To prove this, we will use the next ideas.

Firstly, since $\mathrm{dim}\,O((\mu, \mu')) = n^2 - \mathrm{dim}\,\mathfrak{Der}({\rm T})$, then if $ {\rm T} \to  {\rm T} '$ and  ${\rm T} \not\cong  {\rm T} '$, we have that $\mathrm{dim}\,\mathfrak{Der}( {\rm T} )<\mathrm{dim}\,\mathfrak{Der}( {\rm T} ')$, where $\mathfrak{Der}( {\rm T} )$ denotes the Lie algebra of derivations of  ${\rm T} $. 

{
Secondly, let ${\rm T}$ and ${\rm T}'$ be two transposed Poisson algebras represented by the structures $(\mu, \mu')$ and $(\lambda, \lambda')$ from $\mathcal{T}_n$, respectively. If there exist a parametrized change of basis $g: \mathbb{C}^* \to \textrm{GL}({\mathbb V})$ such that:
$$\lim\limits_{t\to 0} g(t)*(\mu, \mu') = (\lambda, \lambda'),$$
then ${\rm T}\to {\rm T}'$. To prove primary degenerations, we will provide the map $g$.}

Thirdly, to prove non-degenerations we may use a remark that follows from this lemma (see \cite{afm}). 

\begin{lemma}\label{main1}
Consider two transposed Poisson algebras ${\rm T}$ and ${\rm T}'$. Suppose ${\rm T} \to {\rm T}'$. Let C be a Zariski closed in ${\mathcal T}_n$ that is stable by the action of the invertible upper (lower) triangular matrices. Then if there is a representation $(\mu, \mu')$ of ${\rm T}$ in C, then there is a representation $(\lambda, \lambda')$ of ${\rm T}'$ in C.
\end{lemma}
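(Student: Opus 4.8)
The plan is to restate the conclusion group‑theoretically and then exploit the completeness of the flag variety attached to the Borel subgroup of upper triangular matrices. Write $G=\mathrm{GL}(\mathbb{V})$ and let $B\subset G$ be the subgroup of invertible upper triangular matrices (the lower triangular case being identical after transposing). First I would observe that producing a representative of ${\rm T}'$ in $C$ is equivalent to showing that the representative $(\lambda,\lambda')$ lies in $G\cdot C=\{g\cdot c : g\in G,\ c\in C\}$: indeed, if $g\cdot c=(\lambda,\lambda')$ with $c\in C$, then $c=g^{-1}\cdot(\lambda,\lambda')\in O((\lambda,\lambda'))\cap C$ is the desired representative, and conversely. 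Since the hypothesis ${\rm T}\to{\rm T}'$ gives $(\lambda,\lambda')\in\overline{O((\mu,\mu'))}$ while the chosen representative $(\mu,\mu')$ lies in $C$, it suffices to prove that $G\cdot C$ is a \emph{closed}, $G$-stable subset of $\mathcal{T}_n$. Granting this, $G\cdot C$ is $G$-stable and contains $(\mu,\mu')$, hence contains the orbit $O((\mu,\mu'))$; being closed it then contains $\overline{O((\mu,\mu'))}$, and therefore $(\lambda,\lambda')$.

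The $G$-stability of $G\cdot C$ is immediate, so the heart of the argument — and the one point where the hypothesis that $C$ is stable under the \emph{Borel} $B$ (rather than under some arbitrary subgroup) is used — is the closedness of $G\cdot C$. The plan is to pass to the flag variety $\mathcal{F}=G/B$, which is complete (projective), and to introduce the incidence set
\[
\widetilde C=\{(gB, v)\in\mathcal{F}\times\mathcal{T}_n : g^{-1}\cdot v\in C\}.
\]
Because $C$ is $B$-stable, the condition $g^{-1}\cdot v\in C$ is unchanged under $g\mapsto gb$ for $b\in B$, so it depends only on the coset $gB$ and $\widetilde C$ is well defined. It is moreover closed, since its preimage $\{(g,v)\in G\times\mathcal{T}_n : g^{-1}\cdot v\in C\}$ under the quotient map $G\times\mathcal{T}_n\to\mathcal{F}\times\mathcal{T}_n$ is closed, being the preimage of $C$ under the morphism $(g,v)\mapsto g^{-1}\cdot v$.

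The final step is to project. Let $p\colon\mathcal{F}\times\mathcal{T}_n\to\mathcal{T}_n$ be the second projection. As $\mathcal{F}$ is complete, $p$ is a closed morphism, so $p(\widetilde C)$ is closed in $\mathcal{T}_n$; and by construction $p(\widetilde C)=\{v : g^{-1}\cdot v\in C\ \text{for some}\ g\in G\}=G\cdot C$. This is exactly the closedness required, and the chain of inclusions above then yields a representative of ${\rm T}'$ in $C$. I expect the main obstacle to be precisely this closedness of $G\cdot C$: the naive route via a Gauss (LU) factorisation $g(t)=\ell(t)\,b(t)$ of a degenerating curve, pushing the upper triangular factor $b(t)$ into $C$ and trying to take a limit, breaks down because the lower triangular factor $\ell(t)$ may blow up as $t\to 0$, so the limit need not return to $C$. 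Completeness of $G/B$ is exactly what tames this divergence, which is why Borel‑stability — and not merely stability under a single one‑parameter subgroup — is the essential hypothesis.
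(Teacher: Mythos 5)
Your proof is correct and coincides with the argument behind the paper's lemma: the paper gives no proof of its own, deferring to \cite{afm}, where the statement is established by precisely this mechanism --- $B$-stability of $C$ makes the incidence set over the flag variety $G/B$ well defined and closed, completeness of $G/B$ makes the projection closed and hence $G\cdot C$ closed and $G$-stable, and the orbit closure $\overline{O((\mu,\mu'))}$ containing $(\lambda,\lambda')$ is then trapped in $G\cdot C$. Your closing remark correctly identifies why naive LU-factorisation of a degenerating curve fails and why Borel-stability is the essential hypothesis; there is nothing to correct.
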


\begin{remark}
\label{redbil}

Moreover, let ${{\rm T} }$ and ${{\rm T} }'$ be two transposed Poisson algebras represented by the structures $(\mu, \mu')$ and $(\lambda, \lambda')$ from ${{\mathcal T} }_n$. Suppose ${\rm T}\to{\rm T}'$. Then if $\mu, \mu', \lambda, \lambda'$ represent  algebras ${\rm T}_{0}, {\rm T}_{1}, {\rm T}'_{0}, {\rm T}'_{1}$ in the affine space $\mathbb{C}^{n^3}$ of algebras with a single multiplication, respectively, we have ${\rm T}_{0}\to {\rm T}'_{0}$ and $ {\rm T}_{1}\to {\rm T}'_{1}$.
So for example, $(0, \mu)$ can not degenerate in $(\lambda, 0)$ unless $\lambda=0$. 

\end{remark}

Fourthly, to prove ${\rm T}(*)\to {\rm T}'$, suppose that ${\rm T}(\alpha)$ is represented by the structure $(\mu(\alpha),\mu'(\alpha))\in\mathcal{T}_{n}$ for $\alpha\in I$ and ${\rm T}'$ is represented by the structure $(\lambda, \lambda')\in\mathcal{T}_{n}$. If there exists a pair of maps $(f, g)$, where $f:\mathbb{C}^*\to I$ and $g: \mathbb{C}^* \to \textrm{GL}({\mathbb V})$ are such that:
$$\lim\limits_{t\to 0} g(t)*(\mu\big(f(t)\big),\mu'\big(f(t)\big)) = (\lambda, \lambda'),$$
then ${\rm T}(*)\to {\rm T}'$.

Lastly, to prove ${{\rm T} }(*)\not \to {{\rm T} }'$, we may use an analogous of Remark \ref{redbil} for parametric families that follows from Lemma \ref{main2}.

\begin{lemma}\label{main2}
Consider the family of transposed Poisson algebras ${\rm T}(*)$ and the transposed Poisson algebra ${\rm T}'$. Suppose ${\rm T}(*) \to {\rm T}'$. Let C be a Zariski closed in $\mathcal{T}_n$ that is stable by the action of the invertible upper (lower) triangular matrices. Then if there is a representation $(\mu(\alpha), \mu'(\alpha))$ of ${\rm T}(\alpha)$ in C for every $\alpha\in I$, then there is a representation $(\lambda, \lambda')$ of ${\rm T}'$ in C.
\end{lemma}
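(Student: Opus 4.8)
The plan is to mirror the proof of Lemma~\ref{main1}, replacing the single orbit appearing there by the family of orbits indexed by $\alpha\in I$; the algebro-geometric core is identical. Throughout, let $B\subseteq\textrm{GL}({\mathbb V})$ denote the subgroup of invertible upper (respectively lower) triangular matrices, so that the hypothesis on $C$ reads $g*C=C$ for every $g\in B$, and write $\textrm{GL}({\mathbb V})\cdot C:=\bigcup_{g\in\textrm{GL}({\mathbb V})}g*C$ for the saturation of $C$ under the full group.

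The key input I would establish first is that $\textrm{GL}({\mathbb V})\cdot C$ is Zariski closed in $\mathcal{T}_n$. Because $C$ is $B$-stable, the action map $\textrm{GL}({\mathbb V})\times C\to\mathcal{T}_n$ factors through the associated bundle $\textrm{GL}({\mathbb V})\times_B C$, which fibres over the flag variety $\textrm{GL}({\mathbb V})/B$ with fibre $C$. Since $\textrm{GL}({\mathbb V})/B$ is complete, the induced morphism is proper, and therefore its image $\textrm{GL}({\mathbb V})\cdot C$ is closed. This is precisely the mechanism underlying Lemma~\ref{main1}, and it may be quoted from \cite{afm}; I would use it here verbatim.

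With this in hand the rest is formal. By assumption each $(\mu(\alpha),\mu'(\alpha))$ lies in $C$, hence its entire orbit $O((\mu(\alpha),\mu'(\alpha)))=\textrm{GL}({\mathbb V})*(\mu(\alpha),\mu'(\alpha))$ is contained in $\textrm{GL}({\mathbb V})\cdot C$. Taking the union over $\alpha\in I$ and then the Zariski closure, and using that $\textrm{GL}({\mathbb V})\cdot C$ is already closed, I would obtain $\overline{\{O((\mu(\alpha),\mu'(\alpha)))\}_{\alpha\in I}}\subseteq\textrm{GL}({\mathbb V})\cdot C$. The hypothesis ${\rm T}(*)\to{\rm T}'$ says exactly that $(\lambda,\lambda')$ belongs to this closure, so $(\lambda,\lambda')\in\textrm{GL}({\mathbb V})\cdot C$; writing $(\lambda,\lambda')=g*c$ with $g\in\textrm{GL}({\mathbb V})$ and $c\in C$, the point $c=g^{-1}*(\lambda,\lambda')$ is isomorphic to ${\rm T}'$ and lies in $C$, which is the required representation.

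The main obstacle will be the closedness of $\textrm{GL}({\mathbb V})\cdot C$: this is where completeness of the flag variety and the triangular-stability of $C$ are genuinely needed, and it is the single nontrivial ingredient. The only feature specific to the parametric version is that $I$ may be infinite, but this is harmless, since each orbit separately sits inside the closed set $\textrm{GL}({\mathbb V})\cdot C$, so the union and its closure do as well regardless of the size of $I$.
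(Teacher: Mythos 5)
Your proposal is correct and coincides with the argument the paper implicitly relies on: the paper states Lemma \ref{main2} without proof as the parametric analogue of Lemma \ref{main1} from \cite{afm}, and the mechanism there is exactly yours, namely that the $\textrm{GL}({\mathbb V})$-saturation of a closed set stable under the triangular subgroup $B$ is closed, via the properness of $\textrm{GL}({\mathbb V})\times_B C\to \mathcal{T}_n$ coming from completeness of $\textrm{GL}({\mathbb V})/B$. Your only genuinely new step --- observing that each orbit $O((\mu(\alpha),\mu'(\alpha)))$ lies in the closed saturation, so the closure of their union does as well, independently of the size of $I$ --- is precisely what reduces the parametric statement to the single-algebra case, and it is handled correctly.
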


The following result by \cite{gkp21}  will be used. 

\begin{theorem}
The variety of $3$-dimensional commutative associative algebras has a single irreducible component corresponding to the algebra $({\rm T}_{20}, \cdot)$. Moreover, $\textrm{dim}(\overline{({\rm T}_{20}, \cdot)})=9$.
\end{theorem}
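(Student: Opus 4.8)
The theorem asserts that the variety of 3-dimensional commutative associative algebras is irreducible with a single component, namely the closure of the orbit of $({\rm T}_{20}, \cdot)$ where $e_i \cdot e_i = e_i$, and that this closure has dimension 9. My plan is to prove this by exhibiting $({\rm T}_{20}, \cdot)$ as a \emph{rigid} algebra whose orbit closure contains every other 3-dimensional commutative associative algebra, hence swallowing the whole variety.

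First I would observe that $({\rm T}_{20}, \cdot)$ is the unital semisimple algebra $\mathbb{C} \times \mathbb{C} \times \mathbb{C}$, whose derivation algebra is trivial: any derivation must kill each idempotent $e_i$ since $\mathfrak{D}(e_i) = \mathfrak{D}(e_i \cdot e_i) = 2 e_i \cdot \mathfrak{D}(e_i)$ forces $\mathfrak{D}(e_i) \in \mathbb{C} e_i$ and then $\mathfrak{D}(e_i)=0$. Therefore $\dim \mathfrak{Der}({\rm T}_{20}, \cdot) = 0$, and by the orbit-dimension formula $\dim O(({\rm T}_{20},\cdot)) = n^2 - \dim\mathfrak{Der} = 9 - 0 = 9$, which immediately yields the claimed dimension $\dim(\overline{({\rm T}_{20},\cdot)}) = 9$. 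Since the ambient variety of commutative associative multiplications on a 3-dimensional space sits inside the affine space $\mathbb{C}^{n^3} = \mathbb{C}^{27}$ but is cut out by the commutativity and associativity equations, I would confirm that $9$ is the maximal possible orbit dimension, so $\overline{({\rm T}_{20},\cdot)}$ is certainly an irreducible component.

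The substantive step is to show this single component is the \emph{entire} variety, i.e.\ that $({\rm T}_{20}, \cdot) \to ({\rm T}_j, \cdot)$ for every $j$ in the list $20$ through $30$ (the commutative associative parts of the classification). The plan is to construct, for each target algebra, an explicit parametrized change of basis $g(t) \in \textrm{GL}(\mathbb V)$ with $\lim_{t\to 0} g(t) * ({\rm T}_{20},\cdot) = ({\rm T}_j, \cdot)$, as licensed by the primary-degeneration criterion stated earlier. Because $({\rm T}_{20},\cdot)$ is the generic diagonalizable algebra, every degeneration corresponds to a limiting coincidence or collapse of idempotents; for instance, degenerating to the two-idempotent algebra or to nilpotent targets like ${\rm T}_{30}$ ($e_1\cdot e_1 = e_2$) and ${\rm T}_{29}$ is achieved by scaling basis vectors so that certain structure constants vanish in the limit while others survive. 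I would assemble these maps into a finite table, one row per algebra ${\rm T}_{21},\dots,{\rm T}_{30}$.

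The main obstacle is producing the degeneration maps for the nilpotent and mixed algebras (such as ${\rm T}_{28}, {\rm T}_{29}, {\rm T}_{30}$), since a semisimple algebra degenerating to a nilpotent one requires the $g(t)$ to blow up at different rates on different coordinates, and one must verify that no structure constant diverges in the limit. A clean way to organize this is to recall that degenerations of associative algebras are transitive, so it suffices to produce a chain from $({\rm T}_{20},\cdot)$ through intermediate algebras rather than a direct map to each leaf; alternatively, one may invoke the known fact that the full matrix/semisimple algebra degenerates to all lower-dimensional and nilpotent commutative associative structures. Once every ${\rm T}_j$ lies in $\overline{({\rm T}_{20},\cdot)}$, irreducibility of the single orbit closure forces the variety to have exactly one component, completing the proof.
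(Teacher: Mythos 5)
The paper contains no internal proof of this statement for you to be compared against: the authors quote it directly from \cite{gkp21} (``The following result by \cite{gkp21} will be used''), so your attempt must be measured against the standard orbit-closure argument in that reference --- and that is essentially what you reconstruct. Your two pillars are correct: the idempotent computation showing $\mathfrak{Der}({\rm T}_{20},\cdot)=0$ is right, giving $\dim O(({\rm T}_{20},\cdot))=3^2-0=9$; and once every $3$-dimensional commutative associative algebra is shown to lie in $\overline{O(({\rm T}_{20},\cdot))}$, this closure --- irreducible, being the closure of an orbit of the irreducible group $\textrm{GL}({\mathbb V})$ --- is the whole variety, which yields the single-component claim and the dimension simultaneously. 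Two soft spots. First, your aside that maximality of the orbit dimension makes $\overline{({\rm T}_{20},\cdot)}$ ``certainly an irreducible component'' is not a valid inference on its own (a $9$-dimensional orbit closure could a priori sit inside the closure of a positive-dimensional family of $9$-dimensional orbits); it is also redundant, since your main step already gives everything. Second, the entire mathematical content --- the table of degenerations ${\rm T}_{20}\to{\rm T}_j$ for $21\le j\le 30$ --- is promised rather than produced, and your fallback appeal to ``the known fact'' that the semisimple algebra degenerates to all commutative associative structures is essentially the theorem itself, hence circular as stated. The maps do exist and are easy to write down; for the one genuinely nontrivial case (semisimple onto the one-generated nilpotent algebra) take $D=\mathrm{diag}(a_1,a_2,a_3)\in{\rm T}_{20}=\mathbb{C}^3$ with the $a_i$ distinct and nonzero, and set $f_1=tD$, $f_2=t^2D^2$, $f_3=t^3D^3$ (a basis for $t\neq 0$, by a Vandermonde determinant); then $f_1\cdot f_1=f_2$ and $f_1\cdot f_2=f_3$ exactly, while the relation $D^4=s_3D-s_2D^2+s_1D^3$ (with $s_i$ the elementary symmetric functions of the $a_i$) makes every remaining product vanish as $t\to 0$, so the limit is ${\rm T}_{28}$. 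The easy collapses and point collisions handle ${\rm T}_{21},\dots,{\rm T}_{27}$, the rescalings ${\rm T}_{28}\to{\rm T}_{29}$ and ${\rm T}_{28}\to{\rm T}_{30}$ finish the nilpotent cases, and transitivity of $\to$ completes the list. With those explicit parametrized bases written out, your proof is complete and coincides with the argument of \cite{gkp21}.
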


By this theorem, to find the irreducible components of the variety of $3$-dimensional transposed Poisson algebras we only have to study the degenerations and non-degenerations between the algebras ${\rm T}_{i}$, with $i=01, \ldots, 20$. The geometric classification of the variety $\mathcal{T}_3$
 is given in Theorem \ref{geom}.

\begin{theoremC}\label{geom}
The variety of $3$-dimensional transposed Poisson algebras has five irreducible components corresponding to the rigid algebras ${\rm T}_{01}$ and ${\rm T}_{20}$
 and the parametric families ${\rm T}_{09}^{\alpha,\beta}$, ${\rm T}_{12}^\beta$ and ${\rm T}_{17}^{\beta}$.
\end{theoremC}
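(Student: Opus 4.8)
The plan is to combine the algebraic classification of Theorem~A with the degeneration and non-degeneration tools assembled above. As already observed, the quoted result of \cite{gkp21} together with Remark~\ref{redbil} places every algebra with vanishing bracket, namely ${\rm T}_{20},\ldots,{\rm T}_{30}$, inside $\overline{O({\rm T}_{20})}$, so it is enough to analyse the degenerations among ${\rm T}_{01},\ldots,{\rm T}_{20}$. First I would record, for each of these algebras and for each parametric family, the dimension of its derivation algebra; since $\dim O=9-\dim\mathfrak{Der}({\rm T})$ this yields the orbit dimensions, the dimensions of the five candidate components, and the pruning inequality $\dim\mathfrak{Der}({\rm T})<\dim\mathfrak{Der}({\rm T}')$ that is necessary for a proper degeneration ${\rm T}\to{\rm T}'$.

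Next I would establish enough primary degenerations to show that the five closures
$$\overline{O({\rm T}_{01})},\quad\overline{O({\rm T}_{20})},\quad\overline{\textstyle\bigcup_{\alpha,\beta}O({\rm T}_{09}^{\alpha,\beta})},\quad\overline{\textstyle\bigcup_{\beta}O({\rm T}_{12}^{\beta})},\quad\overline{\textstyle\bigcup_{\beta}O({\rm T}_{17}^{\beta})}$$
cover $\mathcal{T}_3$. For every remaining ${\rm T}_i$ ($i\le 20$) that is not one of the five, I would exhibit an explicit parametrized change of basis $g(t)\in\mathrm{GL}(\mathbb{V})$ (together with a parameter map $f(t)$ when the source is itself a family) realizing ${\rm T}_i$ as a limit $\lim_{t\to 0}g(t)*(\mu,\mu')$ of one of the five families. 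Remark~\ref{redbil} both guides and constrains this search, since the Lie and the associative parts must degenerate separately: only families whose underlying Lie algebra degenerates to $\mathfrak{h}$ can reach the algebras ${\rm T}_{02}$--${\rm T}_{06}$, only those over $\mathfrak{g}_2^{\alpha}$ degenerating to $\mathfrak{g}_1$ can reach ${\rm T}_{07},{\rm T}_{08}$, and ${\rm T}_{01}$, being the unique structure over the rigid $\mathfrak{sl}_2$, stands on its own. Because orbit closures are irreducible and closed, once the union of the five equals $\mathcal{T}_3$ the irreducible components must be among them.

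Finally I would prove the matching non-degenerations, certifying that each of the five closures is maximal and that none lies in the union of the others. For ${\rm T}_{01}$ and ${\rm T}_{20}$ this is immediate: by Remark~\ref{redbil} a degeneration to ${\rm T}_{01}$ forces the Lie part to degenerate to $\mathfrak{sl}_2$, which is rigid among $3$-dimensional Lie algebras and carries only the trivial transposed Poisson structure; dually, a degeneration to ${\rm T}_{20}$ forces the associative part to degenerate to the rigid algebra $({\rm T}_{20},\cdot)$ of \cite{gkp21}, which admits only the zero bracket. For the three parametric families I would invoke Lemma~\ref{main1} and Lemma~\ref{main2}: for each ordered pair consisting of a potential source (algebra or family) and a target family, I would construct a Zariski-closed subset $C\subset\mathcal{T}_3$, stable under the invertible upper or lower triangular matrices, which contains a representative of the source but no representative of the generic member of the target. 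Such a $C$ is cut out by a polynomial condition on the structural constants $c_{ij}^k$ and ${c'}_{ij}^{k}$ that survives the relevant one-parameter limits; combined with the derivation-dimension inequalities and with the internal isomorphisms ${\rm T}_{09}^{\alpha,\beta}\cong{\rm T}_{09}^{1/\alpha,\beta/\alpha}$ and ${\rm T}_{11}^{\beta}\cong{\rm T}_{11}^{1/\beta}$ (which fix the true dimension of each family closure), this rules out every forbidden degeneration at once, including the pairwise non-degenerations among ${\rm T}_{09}^{\ast}$, ${\rm T}_{12}^{\ast}$ and ${\rm T}_{17}^{\ast}$.

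The step I expect to be the main obstacle is this last one for the three families. A single point is a component as soon as nothing degenerates onto it, but a family is a component only when the closure of the \emph{union} of its orbits escapes every other orbit closure; hence each non-degeneration must be witnessed by an explicitly built triangular-invariant closed set, and there are many ordered pairs to dispatch. Finding invariants that simultaneously separate ${\rm T}_{09}$, ${\rm T}_{12}$ and ${\rm T}_{17}$ from one another and from all the non-family algebras, while remaining compatible with the parametric identifications, is the delicate part; the tally of orbit-closure dimensions obtained from $\dim\mathfrak{Der}$ then serves as the final check that exactly five irreducible components occur.
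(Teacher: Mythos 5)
Your proposal is correct in outline and shares the paper's two-phase skeleton, but it handles the decisive non-degeneration step by a genuinely different and much heavier route. Like the paper, you first cover $\mathcal{T}_3$ by the five closures via explicit parametrized limits (the paper's degeneration table realizes exactly the limits you promise, e.g. ${\rm T}_{17}^{1/t}\to{\rm T}_{05}$, ${\rm T}_{09}^{t}\to{\rm T}_{11}^{\alpha}$, ${\rm T}_{12}^{t}\to{\rm T}_{13}$), and your argument for ${\rm T}_{01}$ --- rigidity of $\mathfrak{sl}_2$ in the variety of Lie algebras plus the fact that ${\rm T}_{01}$ is the unique transposed Poisson structure over it --- is verbatim the paper's. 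The divergence is in how the remaining candidates are shown to be mutually incomparable. The paper dispatches all of this with a single dimension count: each of $\overline{\bigcup_{\alpha,\beta}O({\rm T}_{09}^{\alpha,\beta})}$ (orbit dimension $7$ plus $2$ parameters), $\overline{\bigcup_{\beta}O({\rm T}_{12}^{\beta})}$ and $\overline{\bigcup_{\beta}O({\rm T}_{17}^{\beta})}$ (orbit dimension $8$ plus $1$ parameter each), and $\overline{O({\rm T}_{20})}$ (orbit dimension $9-\dim\mathfrak{Der}=9$) is an irreducible closed set of the maximal possible dimension $9=\dim\mathrm{GL}_3(\mathbb{C})$; equal-dimensional irreducible closed sets cannot be properly nested, and equality is excluded since the generic Lie and associative parts differ (Remark~\ref{redbil}), so after the dimension tally the \emph{only} non-degenerations left to check are those onto ${\rm T}_{01}$. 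You instead propose (a) a rigidity argument on the associative part to protect ${\rm T}_{20}$ --- a valid dual of the $\mathfrak{sl}_2$ argument, since Theorem~A shows the only bracket compatible with $({\rm T}_{20},\cdot)$ is zero, though the paper gets this for free from the dimension count --- and (b) constructing, for every ordered source--target pair among the three families, a triangular-stable Zariski-closed set via Lemmas~\ref{main1} and~\ref{main2}. Point (b) would work in principle, but it is precisely the step you flag as the main obstacle without exhibiting any of the required invariant sets, and it is unnecessary: the observation you are missing is that the parameter count already pushes each family closure to the maximal dimension $9$, after which pairwise incomparability is automatic. What your route would buy is robustness --- the invariant-set machinery still applies when a candidate closure has non-maximal dimension, where bare dimension counting is silent --- but for this particular variety the paper's shortcut reduces your hardest step to a bookkeeping check of $\dim\mathfrak{Der}$.
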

\begin{proof}

Our strategy to proof the result consists in first showing that every transposed Poisson can be obtained through a degeneration of one algebra from one of the five irreducible components proposed. These degenerations follow from the table below. Then from the orbit dimensions, we just miss the non-degenerations between ${\rm T}\in \left\{{\rm T}_{09}^{\alpha,\beta}, {\rm T}_{12}^\beta, {\rm T}_{17}^{\beta}, {\rm T}_{20}\right\}$ and ${\rm T}_{01}$. Since $\mathfrak{sl}_2$ is an irreducible component of the variety of $3$-dimensional Lie algebras and ${\rm T}_{01}$ is the only algebra such that $({\rm T}_{01}, [-,-])\cong \mathfrak{sl}_2$, then ${\rm T}_{01}$ is an irreducible component.

\end{proof}

{\tiny
\begin{longtable}{|lcl|lll|}
\hline
\multicolumn{3}{|c|}{\textrm{Degeneration}}  & \multicolumn{3}{|c|}{\textrm{Parametrized basis}} \\
\hline
\hline

${\rm T}_{05} $ & $\to$ & ${\rm T}_{02}$ & $
g_{1}(t)= t^{-4}e_1,$ & $
g_{2}(t)= -t^{-3}e_1 + t^{-2}e_2,$ & $
g_{3}(t)= t^{-6}e_3.$ \\
\hline

${\rm T}_{04}^{\alpha} $ & $\to$ & ${\rm T}_{03}^{\alpha}$ & $
g_{1}(t)= e_1,$ & $
g_{2}(t)= t^{-1}e_2,$ & $
g_{3}(t)= t^{-1}e_3.$ \\
\hline

${\rm T}_{05} $ & $\to$ & ${\rm T}_{04}^{\beta\neq0}$ & $
g_{1}(t)= t^{-2}e_1,$ & $
g_{2}(t)= -\beta t^{-2}e_1 + \sqrt{\beta} t^{-1}e_2 + \sqrt{\beta^5}t^{-3}e_3,$ & $
g_{3}(t)= \sqrt{\beta}t^{-3}e_3.$ \\
\hline

${\rm T}_{05} $ & $\to$ & ${\rm T}_{04}^{0}$ & $
g_{1}(t)=  t^{-4} e_1 ,$ & $
g_{2}(t)=  - t^{-2}e_1 + t^{-1}e_2,$ & $
g_{3}(t)=  t^{-5}e_3.$ \\
\hline

${\rm T}_{17}^{\frac{1}{t}} $ & $\to$ & ${\rm T}_{05}$ & $
g_{1}(t)= t^{-1}e_1,$ & $
g_{2}(t)= \frac{1}{2}e_2 + t^{-2}e_3,$ & $
g_{3}(t)= t^{-1}e_2.$ \\
\hline

${\rm T}_{05} $ & $\to$ & ${\rm T}_{06}$ & $
g_{1}(t)= t^{-1}e_1,$ & $
g_{2}(t)= e_2,$ & $
g_{3}(t)= t^{-1}e_3.$ \\
\hline

${\rm T}_{09}^{1, \beta} $ & $\to$ & ${\rm T}_{07}^{\beta}$ & $
g_{1}(t)= e_1,$ & $
g_{2}(t)= t e_2,$ & $
g_{3}(t)= e_3.$ \\
\hline

${\rm T}_{10}^1 $ & $\to$ & ${\rm T}_{08}$ & $
g_{1}(t)= 2 t^{-2}e_1,$ & $
g_{2}(t)= e_1 + t e_2,$ & $
g_{3}(t)= e_3.$ \\
\hline

${\rm T}_{11}^{\alpha} $ & $\to$ & ${\rm T}_{10}^{\alpha}$ & $
g_{1}(t)= t e_1 + e_2,$ & $
g_{2}(t)= (\alpha + t - 1)e_2,$ & $
g_{3}(t)= e_3.$ \\
\hline

${\rm T}_{09}^{t} $ & $\to$ & ${\rm T}_{11}^{\alpha}$ & $
g_{1}(t)= e_1,$ & $
g_{2}(t)= e_2,$ & $
g_{3}(t)= -t^{-1}e_1 + e_3.$ \\
\hline

${\rm T}_{12}^{t} $ & $\to$ & ${\rm T}_{13}$ & $
g_{1}(t)= e_1,$ & $
g_{2}(t)= e_2,$ & $
g_{3}(t)= -t^{-1}e_2 + e_3.$ \\
\hline

${\rm T}_{12}^{t} $ & $\to$ & ${\rm T}_{14}$ & $
g_{1}(t)= t^{-1}e_1,$ & $
g_{2}(t)= t^{-1}e_2,$ & $
g_{3}(t)= -t^{-1} + t^{-2}e_2 + e_3.$ \\
\hline

${\rm T}_{14} $ & $\to$ & ${\rm T}_{15}$ & $
g_{1}(t)= te_1,$ & $
g_{2}(t)= te_2,$ & $
g_{3}(t)= e_3.$ \\
\hline

${\rm T}_{11}^{\alpha} $ & $\to$ & ${\rm T}_{16}$ & $
g_{1}(t)= (t-\alpha+1)e_1 + e_2,$ & $
g_{2}(t)= t e_2,$ & $
g_{3}(t)= e_3.$ \\
\hline

${\rm T}_{18} $ & $\to$ & ${\rm T}_{17}^0$ & $
g_{1}(t)= t e_1 + (t-1)e_2,$ & $
g_{2}(t)= e_2,$ & $
g_{3}(t)= e_3.$ \\
\hline

${\rm T}_{17}^t $ & $\to$ & ${\rm T}_{18}$ & $
g_{1}(t)= e_1,$ & $
g_{2}(t)= e_2,$ & $
g_{3}(t)= -t^{-1} - t^{-1}e_2 + e_3.$ \\
\hline

${\rm T}_{17}^{\gamma} $ & $\to$ & ${\rm T}_{19}^{\gamma}$ & $
g_{1}(t)= t^{-1}e_1,$ & $
g_{2}(t)= t^{-1}e_2,$ & $
g_{3}(t)= \gamma t^{-1}e_2 + e_3.$ \\
\hline

\end{longtable}}

\end{document}